\documentclass[11pt]{article}
\usepackage[T1]{fontenc}
\usepackage[utf8]{inputenc}
\usepackage[english]{babel}
\usepackage{microtype}
\usepackage{amsmath,amssymb,amsfonts,amsthm,mathtools}
\usepackage[shortlabels]{enumitem}
\usepackage{geometry}
\usepackage[colorlinks=true,linkcolor=blue,citecolor=blue,urlcolor=blue]{hyperref}
\newtheorem{theorem}{Theorem}[section]
\newtheorem{lemma}[theorem]{Lemma}
\newtheorem{claim}[theorem]{Claim}

\newtheorem{problem}[theorem]{Problem}

\newcommand{\cE}{\mathcal E}

\newcommand{\cI}{\mathcal I}

\newcommand{\1}{\mathbf 1}
\newcommand{\dd}{\,d}

\usepackage{xcolor}

\usepackage{extarrows}

\newcommand{\clq}{\mathrm{cl}}
\newcommand{\st}{\mathrm{st}}
\newcommand{\Leb}{\lambda}

\title{Paths of Odd Order in Graphs with Given Edge Density}

\author{
Yuyao Yang\textsuperscript{1}\thanks{Email: \href{mailto:alaia\_y@sjtu.edu.cn}{alaia\_y@sjtu.edu.cn}}\quad
and Jiasheng Zeng\textsuperscript{2}\thanks{Email: \href{mailto:jasonzeng@mail.ustc.edu.cn}{jasonzeng@mail.ustc.edu.cn}}
}

\date{\today}
\begin{document}
\maketitle
\begin{abstract}
We determine the asymptotic maximum number of unlabelled copies of $P_{2r+1}$ in graphs with prescribed edge density, where $r\ge1$ is fixed and $P_{2r+1}$ denotes the path on $2r+1$ vertices. If an $n$ vertex graph $G$ has edge density $c=2e(G)/n^2$, then the maximum is $\frac12S_r(c)n^{2r+1}+O(n^{2r})$ for $0<c\le c_r$, and $\frac12c^{r+1/2}n^{2r+1}+O(n^{2r})$ for $c_r\le c<1$, where $S_r(c)$ is the value given by the quasi-star construction and $c_r\in(0,1)$ is an explicit algebraic transition point. Thus the quasi-star construction is asymptotically extremal below the transition, while the quasi-clique construction is asymptotically extremal above the transition. This extends the quasi-star versus quasi-clique theorem of Ahlswede and Katona for $P_3$ and the theorem of Nagy for $P_5$ to all paths with an odd number of vertices. The proof reduces the problem to threshold graphons and then to two endpoint families. The three-step endpoint is handled by reducing the required inequality to coefficient nonnegativity in a Bernstein expansion, which is proved by a direct combinatorial argument.
\end{abstract}

\section{Introduction}\label{secintro}

All graphs in this paper are finite and simple unless graphons are explicitly mentioned. If $G$ is a graph, then $V(G)$ and $E(G)$ denote its vertex set and edge set, $e(G)=|E(G)|$, and $|V(G)|$ denotes its order. For an $n$ vertex graph $G$, we use the normalized edge density $c=2e(G)/n^2$. A graph homomorphism, or simply a homomorphism, from a finite graph $H$ to a finite graph $G$ is a map $\phi:V(H)\to V(G)$ such that $\phi(u)\phi(v)\in E(G)$ whenever $uv\in E(H)$. We write $\hom(H,G)$ for the number of homomorphisms from $H$ to $G$, and $N(H,G)$ for the number of unlabelled subgraphs of $G$ isomorphic to $H$. We write $P_k$ for the path on $k$ vertices.

Problems of extremal graph counting concern the optimization of subgraph counts under prescribed density constraints. In the dense setting considered here, the prescribed parameter is the edge density. A basic minimization problem in this setting is the clique density problem. Lov\'asz and Simonovits initiated the systematic study of the minimum number of cliques forced by a given edge density \cite{LovaszSimonovits1976,LovaszSimonovits1983}. Razborov resolved the triangle case by introducing flag algebras \cite{Razborov2007}, and Reiher later proved the clique density theorem \cite{Reiher2016}. These results show that even a single edge-density constraint can lead to delicate extremal structures.

This paper concerns the corresponding maximization problem. For a fixed graph $H$ and edge density $c$, we seek the maximum possible density of $H$ subject to the edge-density constraint. Graphons provide a natural formulation of this dense extremal problem. A graphon is a symmetric measurable function $W:[0,1]^2\to[0,1]$. For a finite graph $H$, its homomorphism density in $W$ is given by
\[
t(H,W)=\int_{[0,1]^{V(H)}}\prod_{uv\in E(H)}W(x_u,x_v)\prod_{u\in V(H)}\dd x_u.
\]
In particular, $t(K_2,W)$ is the edge density of $W$. Graphon theory reduces the first-order asymptotic problem of dense graph sequences to a variational problem on graphons \cite{LovaszSzegedy2006,Lovasz2012}. For a general fixed graph $H$, the corresponding maximization problem can be written as
\[
M_H(c)=\sup\{t(H,W)\mid W\text{ is a graphon and }t(K_2,W)\le c\}.
\]

There are two most basic candidate constructions in this type of maximization problem. The first is the \textit{quasi-clique}. Given $0\le c\le1$, the quasi-clique graphon with edge density $c$ is defined as $A_{\clq}^c(x,y)=\1_{\max(x,y)<\sqrt c}.$ This means that $A_{\clq}^c$ takes value $1$ on the square $[0,\sqrt c)^2$, which corresponds to a clique-like vertex block of measure $\sqrt c$, while the remaining vertices have no edges. Clearly, 
$t(K_2,A_{\clq}^c)=c.$ If $H$ is a connected graph with $|V(H)|=v$, then any contributing homomorphism must map all vertices of $H$ into this clique-like block. Consequently, $t(H,A_{\clq}^c)=c^{v/2}.$ The second candidate construction is the \textit{quasi-star}. Let $a=\sqrt{1-c}$ and $s=1-a.$ The quasi-star graphon with edge density $c$ is defined as $A_{\st}^c(x,y)=\1_{\min(x,y)<s}.$ Here $[0,s]$ is the dominating block, while $[s,1]$ is the independent block of measure $a$. Since $t(K_2,A_{\st}^c)=1-a^2=c,$ this graphon also has edge density $c$. For any fixed graph $H$, the value $t(H,A_{\st}^c)$ can be described in terms of the independent sets of $H$: vertices mapped to the independent block must form an independent set in $H$. Therefore \[t(H,A_{\st}^c)=\sum_{I\in\cI(H)}s^{|V(H)|-|I|}a^{|I|},\] where $\cI(H)$ denotes the collection of all independent sets of $H$.

Ahlswede and Katona \cite{AhlswedeKatona1978} proved the first nontrivial case, namely that the number of copies of $P_3$ is maximized by either a quasi-star or a quasi-clique given the number of vertices and edges. Nagy later proved the analogous result for $P_5$ and provided the precise asymptotic upper bound for the transition between quasi-star and quasi-clique under fixed edge density \cite{Nagy2017}. The same two-construction phenomenon is also known for stars in several forms. Kenyon, Radin, Ren, and Sadun investigated the phase space of edge density and $k$-star density from the perspective of constrained graphons \cite{KenyonRadinRenSadun2017}. Reiher and Wagner proved that the asymptotic maximum of all $k$-edge stars is attained by either a quasi-star or a quasi-clique \cite{ReiherWagner2018}. Related ordered and colored subgraph density problems were later studied by Cairncross and Mubayi \cite{CairncrossMubayi2025}. In this paper we prove that the same quasi-star versus quasi-clique phenomenon holds for every path $P_{2r+1}$ with an odd number of vertices.

We now give the two functions that determine the extremal value in our theorem. For $0\le c\le1$, let $a=\sqrt{1-c}$ and $s=1-a$. Since the number of independent sets of size $j$ in $P_{2r+1}$ is $\binom{2r+2-j}{j}$, the $P_{2r+1}$ density given by the quasi-star graphon is
\[
S_r(c)=\sum_{j=0}^{r+1}\binom{2r+2-j}{j}s^{2r+1-j}a^j.
\]
The value given by the quasi-clique graphon is $t(P_{2r+1},A_{\clq}^c)=c^{r+1/2}$. The transition point $c_r$ is given by an explicit algebraic equation. Let $A_r(w)=\sum_{j=0}^{r+1}\binom{2r+2-j}{j}w^j$ and $E_r(w)=A_r(w)^2-(1+2w)^{2r+1}$. Since $E_r(0)=0$, the quotient $E_r(w)/w$ is a polynomial. Let $w_r$ be the unique positive zero of $E_r(w)/w$, and define $c_r=\frac{1+2w_r}{(1+w_r)^2}$. The equation $S_r(c)=c^{r+1/2}$ holds in $(0,1)$ exactly at $c=c_r$. Moreover, for $0<c<c_r$ we have $S_r(c)>c^{r+1/2}$, while for $c_r<c<1$ we have $S_r(c)<c^{r+1/2}$.

\begin{theorem}\label{thmmain}
Fix an integer $r\ge1$ and let $0<c<1$. Let $G$ be an $n$ vertex graph with edge density $c=2e(G)/n^2$. Then
\[
N(P_{2r+1},G)\le
\begin{cases}
\frac12S_r(c)n^{2r+1}+O(n^{2r}),&0<c\le c_r,\\
\frac12c^{r+1/2}n^{2r+1}+O(n^{2r}),&c_r\le c<1.
\end{cases}
\]
The bounds are asymptotically sharp. For $0<c<c_r$, the first order extremal value is attained by quasi-star graph sequences. For $c_r<c<1$, the first order extremal value is attained by quasi-clique graph sequences. At $c=c_r$, both constructions attain the same first order value.
\end{theorem}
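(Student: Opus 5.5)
The plan is to pass to the graphon formulation and prove $M_{P_{2r+1}}(c)=\max\{S_r(c),c^{r+1/2}\}$.

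\textbf{Reduction to graphons.} Every unlabelled copy of $P_{2r+1}$ corresponds to exactly two injective homomorphisms. Non-injective homomorphisms number $O(n^{2r})$. Hence
\[
N(P_{2r+1},G)=\tfrac12\hom(P_{2r+1},G)+O(n^{2r})=\tfrac12\,t(P_{2r+1},W_G)\,n^{2r+1}+O(n^{2r}),
\]
where $W_G$ is the step graphon of $G$, which satisfies $t(K_2,W_G)=c$. So it suffices to show $t(P_{2r+1},W)\le\max\{S_r(c),c^{r+1/2}\}$ for every graphon $W$ with $t(K_2,W)\le c$. This bound must be exact, with no error term, and must be monotone in $c$. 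Sharpness follows by taking $G$ to be a quasi-star or a quasi-clique on $n$ vertices with $\lfloor cn^2/2\rfloor$ edges. The counts are polynomial in the block sizes, so rounding the block sizes costs only $O(n^{2r})$.

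\textbf{Path density as a degree-sequence functional.} Write $t(P_{2r+1},W)=\int (W^r\1)(x)^2\,dx$, where $W$ is viewed as a self-adjoint integral operator. This equals $\|W^r\1\|_2^2$, and $W^r\1$ is the walk-count function from the middle vertex. I would then perform a shifting or symmetrization step in the spirit of Reiher--Wagner. Relabel vertices by decreasing degree and show that replacing $W$ by a threshold graphon $W(x,y)=\1_{y<g(x)}$, with $g$ nonincreasing, does not decrease $t(P_{2r+1},W)$ and preserves the edge density. The argument uses Kelmans-type compressions: moving an edge $uz$ to $vz$ when $\deg v\ge\deg u$ does not decrease walk counts of even length from the center. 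Concretely, I would show that for each $x$ the vector $(W^r\1)$ only gains by compression, via a majorization argument on $W^k\1$ for $k\le r$.

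Once $W$ is threshold, it is determined by a nonincreasing function $g$. The functional then becomes an explicit polynomial-type integral in $g$ with the constraint $\int g=c$. I would next argue, via a convexity or perturbation (Lagrange) argument, that an optimal $g$ takes at most two distinct values. This reduces the problem to two endpoint families. The first is the quasi-clique family, a clique on $[0,p)$ plus a star-like attachment. The second is the quasi-star family, a clique on $[0,s)$ joined completely to an independent set, with some partial ``three-step'' structure in between. Along each family, $t(P_{2r+1},\cdot)$ is an explicit polynomial in the parameters.

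\textbf{Endpoint inequalities (main obstacle).} On each endpoint family I must show that the value is at most $\max\{S_r(c),c^{r+1/2}\}$. The two-block family is straightforward and reduces to comparing a one-parameter curve with its ends. The three-step endpoint is the hard part. There the difference $\max\{S_r,c^{r+1/2}\}-t$ is a polynomial in a parameter $\theta\in[0,1]$ with coefficients depending on $a,s$. My plan is to expand this difference in the Bernstein basis $\theta^i(1-\theta)^{m-i}$ and prove that every coefficient is nonnegative. To do this, I would interpret each coefficient as a signed count of independent-set configurations on the path, namely words over $\{$clique, independent, middle$\}$ indexed by path vertices. I would then exhibit an injection from the negative terms to the positive ones.

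The comparison $S_r$ versus $c^{r+1/2}$ itself is handled by substituting $w=a/s$. With this substitution, $S_r(c)=s^{2r+1}A_r(w)$ and $c=s^2(1+2w)$, so the sign is exactly the sign of $E_r(w)$. Uniqueness of its positive root, for instance via Descartes' rule on $E_r(w)/w$, then yields the transition at $c_r$ as stated.
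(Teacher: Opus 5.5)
Your proposal has a genuine gap at its decisive step. After passing to threshold graphons, you say a ``convexity or perturbation (Lagrange) argument'' shows that an optimal $g$ takes at most two distinct values, but no argument is given.

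\begin{itemize}
\item For a symmetric threshold graphon, the only two-valued cases (apart from trivial ones) are the quasi-clique and the quasi-star. So this claim is already the conclusion of the theorem.
\item It is also inconsistent with your next step. That step needs a three-step family whose degree function takes three distinct nonzero values.
\item For general $H$, optimal threshold graphons can need more steps (Day--Sarkar, Blekherman--Patel). So a first-order condition cannot suffice without a quantitative use of the path structure.
\end{itemize}

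The idea you are missing is the paper's one-defect collapse. Write $q(u)=u+g(u)$ on $[0,m]$ with $m=\Leb(D)$. Fix $m$ and $G=\int_0^m g$, which fixes $c=m^2+2G$, and expand $t(P_{2r+1},W_D)$ over independent sets.

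\begin{itemize}
\item For $I\ne I_0=\{0,2,\dots,2r\}$, a chainwise matching plus monotonicity of $g$ bounds the term by $B^{\nu(I)}G^{2|I|-|N(I)|}m^{2r+1-|I|-|N(I)|}$. Here $B=\int_{[0,m]^2}g(\min(u,v))\,du\,dv=mG-2J$ and $J=\int_0^m(u-m/2)g(u)\,du$.
\item For $I_0$, three ingredients give a term linear in $J$: the bound $\min\{g(u),g(v)\}\le\sqrt{g(u)g(v)}$, Cauchy--Schwarz, and the moment inequality $\bigl(\int_0^m g^{3/2}\bigr)^2\le G^3/m+2(1-m)GJ/m$.
\item The resulting majorant $\Psi(J)$ is convex on $[0,J_{\max}]$. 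Its two endpoint values are attained exactly by the interval graphon $D_2=[G/m,G/m+m]$ and by the three-step graphon $D_3=[0,m-\rho]\cup[1-\rho,1]$, where $\rho=G/(1-m)$.
\end{itemize}

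This is what reduces the problem to two one-parameter families, each interpolating between quasi-star and quasi-clique.

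Your Bernstein step is also aimed at the wrong object. The difference $\max\{S_r(c),c^{r+1/2}\}-T_r$ is not a polynomial: it switches branch at the algebraic point $c_r$, and on the clique branch $c^{r+1/2}=(s(1+a))^{r+1/2}$. The paper never compares with the target values. Instead it proves unimodality along the fixed-density curve $y^2-x^2=1-c$.

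\begin{itemize}
\item With $a=x$, $b=y-x$, $\ell=1-y$, differentiation along the curve is $\frac{1}{a+b}\mathcal{L}$.
\item The identity $\mathcal{L}F_n=ab\,\cE F_{n-2}$ gives $\frac{d}{dx}T_r=\frac{ab}{a+b}\cE F_{2r-1}$.
\item Then $\frac{d}{dx}\cE F_{2r-1}=\frac1y\mathcal{L}\cE F_{2r-1}\ge0$, because $\mathcal{L}\cE F_{2r-1}$ has nonnegative coefficients (explicit word counts). Hence it has nonnegative Bernstein coefficients in $u=x/y$ and $y$.
\end{itemize}

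So $dT_r/dx$ changes sign at most once, from negative to positive, and the maximum is at an endpoint.

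There are two smaller problems.

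\begin{itemize}
\item Your compression rule is false for $r\ge2$. Let $u$ be adjacent to a pendant vertex $z$ and to a vertex of degree $5$ whose other neighbours are leaves, and let $v$ be adjacent to two leaves. Then $\deg u=\deg v$, yet moving $uz$ to $vz$ lowers $\hom(P_5,\cdot)$ from $188$ to $186$. The threshold reduction should simply be quoted from Blekherman--Patel.
\item Descartes' rule on $E_r(w)/w$ needs a coefficient sign pattern you have not verified for general $r$. The paper instead applies it to $(1+2\rho)A_r'-(2r+1)A_r$, whose coefficient signs are explicitly $0,-,\dots,-,+$.
\end{itemize}
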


We prove the following graphon statement and then derive the finite graph theorem from it. For $0\le c\le1$, define
\[
M_r(c)=M_{P_{2r+1}}(c)=\sup\{t(P_{2r+1},W)\mid W\text{ is a graphon and }t(K_2,W)\le c\}.
\]

\begin{theorem}\label{thmgraphonform}
For every integer $r\ge1$ and every $0\le c\le1$,
\[
M_r(c)=\max\{S_r(c),c^{r+1/2}\}.
\]
Equivalently, $M_r(c)=S_r(c)$ for $0\le c\le c_r$, and $M_r(c)=c^{r+1/2}$ for $c_r\le c\le1$.
\end{theorem}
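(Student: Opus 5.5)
The lower bound $M_r(c)\ge\max\{S_r(c),c^{r+1/2}\}$ is immediate from the two constructions $A_{\st}^c$ and $A_{\clq}^c$. All the work is in the upper bound, which I would prove in three reductions, following the outline in the abstract.

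\textbf{Step 1: reduction to threshold graphons.} By compactness of graphon space and continuity of $t(P_{2r+1},\cdot)$, a maximizer $W$ exists with $t(K_2,W)=c$; monotonicity lets us assume the edge constraint is tight. I would show that $W$ can be taken to be $\{0,1\}$-valued and a threshold graphon, that is, $W(x,y)=\1_{y<f(x)}$ for a nonincreasing $f$, after reordering by degree. The tool is a symmetrization or shifting argument. For $P_{2r+1}$ with middle vertex $v_{r+1}$, write
\[
t(P_{2r+1},W)=\int \ell_r(x)^2\dd x, \qquad \ell_r(x)=t_x(P_{r+1},W),
\]
where $\ell_r(x)$ is the density of walks of length $r$ starting at $x$. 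The functional is therefore a convex-type expression in the walk-count functions. Moving edge mass from pairs $(x,y)$ with small $\ell$-weight to pairs with larger weight (Kelmans-type compression) does not decrease the count. Iterating this, or arguing via a first-variation/Lagrange condition at a maximizer, forces $W=\1_{\{\phi(x)\phi(y)>\lambda\}}$-type structure, and hence a threshold graphon.

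\textbf{Step 2: reduction to two endpoint families.} A threshold graphon is determined by its nonincreasing boundary $f$, and $t(P_{2r+1},W)$ becomes an iterated integral in $f$. I would use a convexity/extreme-point argument. Fix all but a local modification of the staircase shape of $f$ while preserving $\int f=c$; the count is convex along suitable one-parameter families (for example, moving a step of the staircase). Hence the maximum lies at shapes with very few steps. I would aim to show that the optimum is either a quasi-clique or quasi-star, or lies in one further two-parameter family: a clique block joined completely to a further block, that is, a ``three-step'' threshold graph interpolating between the two. The two-step family yields exactly $\max\{S_r,c^{r+1/2}\}$ after a one-variable calculus check.

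\textbf{Step 3: the three-step endpoint, the main obstacle.} For the remaining family I would parametrize by block sizes and, using homogeneity in the parameters, express $\max\{S_r,c^{r+1/2}\}-t(P_{2r+1},W)$ as a homogeneous polynomial on a simplex. I would then expand it in the Bernstein basis $\binom{m}{i,j,k}x^iy^jz^k$. Nonnegativity of all Bernstein coefficients suffices, and I would try to prove it combinatorially. Each coefficient counts weighted homomorphisms of $P_{2r+1}$ by which blocks each vertex lands in, so one can compare it term by term via an injection between walk patterns, using $\binom{2r+2-j}{j}$ independent-set counts. The hard part is handling the comparison with the maximum of the two functions. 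I would first split at $c_r$, using the defining equation $A_r(w_r)^2=(1+2w_r)^{2r+1}$ to substitute, and then try to prove coefficient nonnegativity separately on each side.

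Finally, I would verify the stated location of $c_r$ by the substitution $c=(1+2w)/(1+w)^2$. This gives $S_r=A_r(w)/(1+w)^{2r+1}$ and $c^{r+1/2}=(1+2w)^{r+1/2}/(1+w)^{2r+1}$, so the comparison becomes the sign of $E_r(w)$, whose unique positive root $w_r$ gives the transition.
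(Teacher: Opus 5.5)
Your lower bound, the interval-family calculus and the $c_r$ substitution are fine. Step 1 is also fine if you simply cite Blekherman and Patel, as the paper does. Your variational sketch would not prove it as written, because the gradient of $t(P_{2r+1},\cdot)$ at $(x,y)$ is $\sum_{k=0}^{2r-1}\ell_k(x)\ell_{2r-1-k}(y)$, which is not of product form.

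\textbf{First gap: Step 2.} You give no mechanism for passing from an arbitrary measurable $D$, with possibly infinitely many steps, to at most three steps. The claim that the count is convex when a step is moved at fixed edge density is unsupported. $t(P_{2r+1},W_D)$ is a degree $2r+1$ polynomial in the step positions with no evident convexity, and you do not say how such moves would reduce the number of steps.

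The paper's collapse works differently. Fix $m=\Leb(D)$ and $c$, and write $D$ via its quantile $q(u)=u+g(u)$ with $g$ nondecreasing and $0\le g\le 1-m$. Expand over independent sets of the path, and bound every term by a function of the single scalar $J=\int_0^m(u-m/2)g(u)\dd u$:
\begin{itemize}
\item For $I\neq I_0$, a chainwise injection into unused neighbours plus monotonicity of $g$ gives the bound $(mG-2J)^{\nu(I)}G^{2|I|-|N(I)|}m^{2r+1-|I|-|N(I)|}$.
\item For $I_0$, one uses $\min\{g(u),g(v)\}\le\sqrt{g(u)g(v)}$ together with the moment inequality $(\int_0^m g^{3/2})^2\le G^3/m+2(1-m)GJ/m$.
\end{itemize}
The resulting bound is convex in $J\in[0,J_{\max}]$. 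It is attained exactly at $g$ constant (an interval) and at $g=(1-m)\1_{[m-\rho,m]}$, i.e.\ $D=[0,x]\cup[y,1]$. That family consists of a clique-only block, an independent block, and a dominating block. Your phrase ``a clique block joined completely to a further block'' does not pin it down; read literally, it describes just a quasi-star.

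\textbf{Second gap: Step 3.} A Bernstein certificate for $\max\{S_r(c),c^{r+1/2}\}-T_r\ge 0$ is not available as stated. With $a=x$, $b=y-x$, $\ell=1-y$, both $c^{r+1/2}$ and $\sqrt{1-c}=\sqrt{b(2a+b)}$ appear, so the target is not a polynomial on the simplex. Splitting at $c_r$ would also require controlling an algebraic number of growing degree with no closed form, uniformly in $r$.

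The missing idea is to avoid comparing with the maximum in the interior at all:
\begin{itemize}
\item Along the fixed-density curve $y=\sqrt{1-c+x^2}$ the directional derivative is $\frac1{a+b}\mathcal{L}$. The word identity $\mathcal{L}F_n=ab\,\cE F_{n-2}$ then gives $\frac{d}{dx}T_r=\frac{ab}{a+b}\cE F_{2r-1}$.
\item $\cE F_{2r-1}$ is nondecreasing along the curve. Its derivative $\frac1y\mathcal{L}\cE F_{2r-1}(uy,(1-u)y,1-y)$ has nonnegative Bernstein coefficients in $u=x/y$ and $y$, inherited from the nonnegative monomial coefficients of $\mathcal{L}\cE F_{2r-1}$.
\item Hence $T_r$ decreases and then increases along the curve. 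Its maximum sits at $x=0$ (quasi-star) or $x=\sqrt c$ (quasi-clique), and $c_r$ never enters the upper bound.
\end{itemize}
The positivity you need is for the derivative of the rescaled slope $\cE F_{2r-1}$, not for the gap to the extremal value.
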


If $G$ has vertex set $\{1,\ldots,n\}$, then $W_G$ denotes the step graphon which is equal to $1$ on $[(i-1)/n,i/n)\times[(j-1)/n,j/n)$ whenever $ij\in E(G)$, and is equal to $0$ otherwise.
The finite graph statement follows from Theorem~\ref{thmgraphonform} by the standard passage from homomorphism densities to injective copies. If $W_G$ is the step graphon associated with an $n$ vertex graph $G$, then
\[
t(P_{2r+1},W_G)=\frac{\hom(P_{2r+1},G)}{n^{2r+1}}.
\]
The number of homomorphisms from $P_{2r+1}$ to $G$ that identify two vertices of $P_{2r+1}$ is $O(n^{2r})$, and every unlabelled simple copy of $P_{2r+1}$ is counted by exactly two injective homomorphisms. Hence
\[
N(P_{2r+1},G)=\frac12t(P_{2r+1},W_G)n^{2r+1}+O(n^{2r}).
\]
The lower bounds in Theorem~\ref{thmmain} follow from graph sequences converging to $A_{\st}^c$ and $A_{\clq}^c$.

The maximization problem under a fixed edge density is closely related to Alon's theory of fixed-edge subgraph counting. Alon systematically studied the problem of maximizing the number of copies of a fixed graph $H$ given the number of edges $e$ but without fixing the number of vertices \cite{Alon1981,Alon1986}. Friedgut and Kahn extended the order-of-magnitude results for this class of problems to hypergraphs \cite{FriedgutKahn1998}. Janson, Oleszkiewicz, and Ruci\'nski also used such fixed-edge extremal estimates when studying upper tails of subgraph counts in random graphs \cite{JansonOleszkiewiczRucinski2004}. F\"uredi studied the maximum number of copies of star-forests in the fixed-edge setting \cite{Furedi1992}.  Recently, Kuang, Sun, Wang, and Zeng proved Alon's conjecture on the existence of the leading constant for the fixed-edge problem and characterized the corresponding limiting constant by a finite-core variational problem \cite{KuangSunWangZeng2026}. Very recently, Zeng developed a finite-kernel framework for sparse extremal graph counting, including sharp asymptotics attained by threshold graphons as the edge density tends to zero~\cite{Zeng2026FiniteKernel}.

However, fixed-edge problems are fundamentally different from the fixed edge density problem studied in this paper. In the fixed-edge problem, the number of vertices is not fixed, and extremal graphs are free to choose their support size. For instance, in the problem of counting $P_{2r+1}$ with only the number of edges $M$ specified, the maximum number of copies of $P_{2r+1}$ is $\Theta(M^{r+1})$~\cite{BollobasSarkar2001,BollobasSarkar2003,KuangSunWangZeng2026}, whereas in the fixed edge density problem with $c=2M/n^2$, the number of edges is of order $cn^2/2$, and the maximum number of $P_{2r+1}$ copies is $\Theta(M^{r+1/2})$, because the vertex scale itself participates in the extremal structure. Also, Gerbner, Nagy, Patk\'os, and Vizer studied the maximum number of copies of an arbitrary fixed graph $H$ given both the number of vertices and edges, and proved that when the edge density is sufficiently close to $1$, the quasi-clique is asymptotically optimal \cite{GerbnerNagyPatkosVizer2021}. This indicates that clique-like behavior at the high-density end is quite universal.

\medskip
The rest of the paper is organized as follows. Section~\ref{secprelim} contains the preliminaries. Section~\ref{secproof} gives the proof of the main result. Section~\ref{secconclusion} contains concluding remarks and further problems.

\section{Preliminaries}\label{secprelim}

Throughout this section, $\Leb$ denotes Lebesgue measure, and the same symbol is used for product Lebesgue measure when the ambient space is clear. We write $\1_E$ for the indicator function of a measurable set or event $E$. Equalities between measurable sets and measurable functions are understood up to null sets whenever this does not affect the corresponding graphon or integral. The phrase almost every always refers to Lebesgue measure.

For a measurable set $D\subseteq[0,1]$, the associated threshold graphon is $W_D(x,y)=\1_{\max(x,y)\in D}$. We use the following graphon form of the threshold reduction of Blekherman and Patel.

\begin{lemma}[Blekherman and Patel {\cite{BlekhermanPatel2024}}, graphon form]\label{lemthresholdreduction}
For every fixed graph $H$ and every $0\le c\le1$,
\[
\begin{aligned}
   &\sup\{t(H,W)\mid W\text{ is a graphon and }t(K_2,W)\le c\}\\ &\quad=\sup\left\{t(H,W_D)\mid D\subseteq[0,1]\text{ is measurable and }2\int_D t\dd t\le c\right\}. 
\end{aligned}
\]
\end{lemma}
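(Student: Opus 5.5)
The inequality ``$\ge$'' is immediate. For a measurable $D\subseteq[0,1]$, the function $W_D(x,y)=\1_{\max(x,y)\in D}$ is symmetric, measurable and $\{0,1\}$-valued, so it is a graphon. Its edge density is
\[
t(K_2,W_D)=\Leb\{(x,y)\mid \max(x,y)\in D\}=\int_D 2t\dd t,
\]
because the set of pairs whose maximum lies in $[t,t+\dd t]$ has measure $2t\dd t$. Hence every admissible $D$ on the right-hand side gives an admissible graphon on the left-hand side.

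For ``$\le$'' I would transfer the finite theorem of Blekherman and Patel to graphons by approximation. I take their result in the following form: among graphs with $n$ vertices and $m$ edges, $\hom(H,\cdot)$ is maximized by a threshold graph. Fix a graphon $W$ with $t(K_2,W)\le c$ and fix $\varepsilon>0$.

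\emph{Create slack.} Replace $W$ by $W'=(1-\varepsilon)W$. Then $t(K_2,W')\le(1-\varepsilon)c$ and $t(H,W')\ge(1-\varepsilon)^{e(H)}t(H,W)$.

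\emph{Sample.} Let $G_n$ be $W'$-random graphs. Almost surely, or after choosing a suitable realization for each $n$, we have $t(H,W_{G_n})\to t(H,W')$ and $2e(G_n)/n^2\to t(K_2,W')$. In particular, $2e(G_n)/n^2\le c-\varepsilon c/2$ for all large $n$. (If $c=0$ both sides are $0$, so we may assume $c>0$.)

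\emph{Thresholdize.} Let $T_n$ be a threshold graph on $n$ vertices with $e(T_n)=e(G_n)$ and $\hom(H,T_n)\ge\hom(H,G_n)$. Such a graph exists by the Blekherman--Patel theorem.

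\emph{Encode as $W_D$.} Label the vertices of $T_n$ as $1,\dots,n$ in its creation order, where each new vertex is either isolated or dominating. Then for $i\neq j$, the vertices $i$ and $j$ are adjacent exactly when $\max(i,j)$ is a dominating vertex. Let $D_n$ be the union of the intervals $[(j-1)/n,j/n)$ over dominating vertices $j$. Then $W_{T_n}$ and $W_{D_n}$ agree off the diagonal blocks, which have total measure $1/n$. Consequently $|t(H,W_{T_n})-t(H,W_{D_n})|\le e(H)/n$ and $\bigl|2\int_{D_n}t\dd t-2e(T_n)/n^2\bigr|\le 1/n$. For large $n$ this gives $2\int_{D_n}t\dd t\le c$.

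Combining these steps,
\[
\sup_D t(H,W_D)\ \ge\ t(H,W_{D_n})\ \ge\ t(H,W_{G_n})-\frac{e(H)}{n}\ \longrightarrow\ t(H,W')\ \ge\ (1-\varepsilon)^{e(H)}t(H,W).
\]
Letting $\varepsilon\to0$ and then taking the supremum over $W$ gives ``$\le$''.

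The main obstacle is matching the exact finite statement of Blekherman and Patel, namely homomorphism versus injective counts and fixed versus bounded edge number. If their theorem is phrased for injective copies or for at most $m$ edges, I would absorb the difference into $O(n^{v(H)-1})$ error terms; this is harmless after normalizing by $n^{v(H)}$. The slack created by the factor $(1-\varepsilon)$ handles both the $O(1/n)$ diagonal correction and any monotonicity issue in the edge constraint.
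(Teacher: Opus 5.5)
Your ``$\ge$'' direction is correct, and the approximation scaffolding for ``$\le$'' is sound: the $(1-\varepsilon)$ slack, the $W'$-random graphs, and the encoding of a threshold graph in creation order by $D_n$ (in fact $W_{D_n}\ge W_{T_n}$ pointwise, so only the $1/n$ edge correction matters). The genuine gap is the Thresholdize step. The finite theorem you invoke is false, so it cannot be what Blekherman and Patel prove. Take $H=C_4$ and $n=m=4$. The only graphs with $4$ vertices and $4$ edges are $C_4$ and the paw (a triangle with a pendant edge), and only the paw is threshold. Yet $\hom(C_4,C_4)=\mathrm{tr}A^4=32$, while counting closed $4$-walks in the paw (e.g.\ via its characteristic polynomial $x^4-4x^2-2x+1$) gives $\hom(C_4,\mathrm{paw})=28$.

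Your hedges do not repair this. The paw contains no $4$-cycle, so the injective version fails too. Every threshold graph on $4$ vertices with at most $4$ edges has $\hom(C_4,\cdot)\le 28$, so the ``at most $m$ edges'' version fails as well. The same example shows that the natural shifting (Kelmans) move, which turns $C_4$ into the paw, can decrease $\hom(H,\cdot)$. So there is no cheap exact finite thresholdization to fall back on.

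What your argument actually needs in that step is an asymptotic statement. For each $G_n$ you need a threshold graph $T_n$ on $n$ vertices with $2e(T_n)/n^2\le 2e(G_n)/n^2+o(1)$ and $t(H,W_{T_n})\ge t(H,W_{G_n})-o(1)$. Your slack would absorb such errors. However, this statement is essentially a restatement of the lemma itself: it follows from the graphon form by sampling from a near-optimal $W_D$, since $W_D$-random graphs are threshold graphs. Routing through it therefore proves nothing unless you prove it independently, and that is where all the content lies.

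The paper does not attempt such a derivation. It gives no proof and imports the Blekherman--Patel threshold theorem, which is asymptotic in nature, directly in the graphon form stated. You should either cite it in that form, as the paper does, or supply an actual proof of the asymptotic thresholdization.
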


The next lemma gives the edge density of a threshold graphon in the form used throughout the proof. 

\begin{lemma}\label{lemthresholdedge}
For every measurable set $D\subseteq[0,1]$, one has $t(K_2,W_D)=2\int_Dt\dd t$.
\end{lemma}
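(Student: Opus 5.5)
The plan is to compute $t(K_2,W_D)$ directly from the definition and rewrite it as an integral over the diagonal variable $\max(x,y)$. By definition,
\[
t(K_2,W_D)=\int_{[0,1]^2}\1_{\max(x,y)\in D}\dd x\dd y .
\]
The diagonal $\{x=y\}$ is a null set in $[0,1]^2$, so the integral splits into the two regions $\{x<y\}$ and $\{y<x\}$. These regions are interchanged by the swap $(x,y)\mapsto(y,x)$, which preserves Lebesgue measure and leaves the integrand unchanged. Hence
\[
t(K_2,W_D)=2\int_{\{x<y\}}\1_{y\in D}\dd x\dd y .
\]

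Next we apply Tonelli's theorem. The integrand is a nonnegative measurable function, since it is the indicator of the measurable set $\{(x,y): x<y,\ y\in D\}$. Integrating first in $x$ over $[0,y)$ gives the inner value $y$, so
\[
t(K_2,W_D)=2\int_0^1 \1_{y\in D}\, y\dd y=2\int_D t\dd t .
\]

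An equivalent route is to note that, for $(x,y)$ uniform on $[0,1]^2$, the variable $\max(x,y)$ has distribution function $t^2$ and therefore density $2t$. Then $t(K_2,W_D)=\Pr(\max(x,y)\in D)=\int_D 2t\dd t$.

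There is no real obstacle here. The only points needing a remark are the measurability of $W_D$, which holds because $(x,y)\mapsto\max(x,y)$ is continuous and $D$ is measurable, and the fact that the diagonal has measure zero.
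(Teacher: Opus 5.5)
Your argument is correct, but it takes a different route from the paper's. The paper first checks the identity for an interval $D=[\alpha,\beta]$, where $\{\alpha\le\max(x,y)\le\beta\}$ is an L-shaped region of area $\beta^2-\alpha^2$. It extends this to finite disjoint unions of intervals by additivity. It then reaches an arbitrary measurable $D$ by approximating with finite unions of intervals $E_k$ satisfying $\Leb(D\triangle E_k)\to0$, and controls the left-hand side through $\Leb\{(x,y)\mid\max(x,y)\in D\triangle E_k\}\le2\Leb(D\triangle E_k)$. You instead discard the null diagonal, use the swap symmetry, and integrate out $x$ by Tonelli. This gives $2\int_0^1\1_D(y)\,y\dd y$ in one step for every measurable $D$, with no approximation and no limit. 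The same computation shows that $\max(x,y)$ pushes Lebesgue measure on $[0,1]^2$ forward to $2t\dd t$, so $\int_{[0,1]^2}f(\max(x,y))\dd x\dd y=\int_0^1 2tf(t)\dd t$ for all bounded measurable $f$. The paper's route uses only areas of elementary regions and regularity of Lebesgue measure; yours relies on Fubini--Tonelli but is shorter and more general. Your alternative via the distribution function $t^2$ is essentially the paper's interval computation. It extends that computation by uniqueness of a measure determined on the intervals $[0,t]$, the device the paper uses in Lemma~\ref{lemquantile}, rather than by approximation.

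One small correction to your closing remark. Continuity of $\max$ only gives measurability of $\{\max(x,y)\in D\}$ when $D$ is Borel, because preimages of Lebesgue measurable sets under continuous maps need not be Lebesgue measurable. For Lebesgue measurable $D$, the right justification is your own decomposition. Off the diagonal the set equals $(\{x<y\}\cap([0,1]\times D))\cup(\{y<x\}\cap(D\times[0,1]))$, which is measurable, and the remaining piece lies in the null diagonal.
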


\begin{proof}
It suffices to prove the identity for intervals, then for finite disjoint unions of intervals, and then for measurable sets by Lebesgue regularity. If $D=[\alpha,\beta]$, then $t(K_2,W_D)=\Leb\{(x,y)\in[0,1]^2\mid \alpha\le\max(x,y)\le\beta\}=\beta^2-\alpha^2=\int_\alpha^\beta2t\dd t$. Finite disjoint unions follow by additivity. For a measurable set $D$, choose finite unions of intervals $E_k$ such that $\Leb(D\triangle E_k)\to0$. Since $\Leb\{(x,y)\mid \max(x,y)\in D\triangle E_k\}\le2\Leb(D\triangle E_k)$, both sides converge from $E_k$ to $D$.
\end{proof}

We next replace the set $D$ by its increasing quantile. This turns the optimization over measurable sets into an optimization over nondecreasing functions.

\begin{lemma}\label{lemquantile}
Let $D\subseteq[0,1]$ be measurable and let $m=\Leb(D)$. There is a nondecreasing function $q:[0,m]\to[0,1]$ such that $\int_D f(t)\dd t=\int_0^m f(q(u))\dd u$ for every bounded measurable function $f$. Moreover $q(u)=u+g(u)$ for a nonnegative nondecreasing function $g$, one has $0\le g(u)\le1-m$ for almost every $u$, and $\Leb(D\cap[0,q(u)])=u$ for almost every $u\in[0,m]$.
\end{lemma}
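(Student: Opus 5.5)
The plan is to take $q$ to be the generalized inverse of the distribution function of $D$. Set $F(t)=\Leb(D\cap[0,t])$ for $t\in[0,1]$. Then $F$ is nondecreasing and $1$-Lipschitz, with $F(0)=0$ and $F(1)=m$. Moreover $F(t)-F(t')\le t-t'$ for $t'\le t$, so $t-F(t)$ is nondecreasing and takes values in $[0,1-m]$. For $u\in[0,m]$, define $q(u)=\inf\{t\in[0,1]\mid F(t)\ge u\}$. This function is nondecreasing, and by continuity of $F$ it satisfies $F(q(u))=u$ for every $u\in[0,m]$. This already gives the last claim, and for every $u$ rather than merely almost every $u$.

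The change of variables is proved first for indicators of intervals and then extended. Since $F(q(u))=u$ and $F$ is nondecreasing, for any $b$ we have $q(u)\le b$ if and only if $u\le F(b)$, using that $q(u)$ is the least $t$ with $F(t)\ge u$. Hence $\Leb\{u\in[0,m]\mid q(u)\le b\}=F(b)=\int_D\1_{[0,b]}(t)\dd t$. So the push-forward of Lebesgue measure on $[0,m]$ under $q$ agrees with $\Leb|_D$ on all sets $[0,b]$. By the $\pi$–$\lambda$ theorem the two measures coincide on Borel sets. The identity then extends to bounded Borel $f$ by linearity and monotone class arguments. Lebesgue measurable $f$ are handled by changing $f$ on a null set to make it Borel. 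Note that $q^{-1}(N)$ is null whenever $N$ is a Borel null set: its measure equals $\Leb(D\cap N)=0$. Since every Lebesgue null set is contained in a Borel null set, the modification does not affect either integral.

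Finally, set $g(u)=q(u)-u$. We have $g(u)=q(u)-F(q(u))=h(q(u))$ where $h(t)=t-F(t)$. Here $h$ is nondecreasing with values in $[0,1-m]$ and $q$ is nondecreasing, so $g$ is a composition of nondecreasing maps. It is therefore nonnegative, nondecreasing, and bounded by $1-m$ everywhere.

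The only delicate point is the identity $F(q(u))=u$ together with the equivalence $q(u)\le b\iff u\le F(b)$. Both rely on the continuity of $F$: the infimum defining $q(u)$ is attained and equality holds there. The potential flat pieces of $F$, which are gaps of $D$, are skipped by taking the infimum. Everything else is routine measure theory.
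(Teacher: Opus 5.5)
Your proposal is correct and follows the paper's proof essentially step for step: the same distribution function $F$, the same generalized inverse $q$, the identity $F(q(u))=u$ for every $u$, agreement of the push-forward of Lebesgue measure with $\Leb|_D$ on the intervals $[0,b]$, and the bound $0\le g\le 1-m$ with $g(u)=q(u)-F(q(u))$. The only differences are cosmetic. You get monotonicity of $g$ by writing it as $h\circ q$ with $h(t)=t-F(t)$, and you pass from Borel to Lebesgue measurable $f$ through Borel null sets rather than by completing the measures; the latter is, if anything, slightly more careful than the paper's treatment.
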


\begin{proof}
Define $F(t)=\Leb(D\cap[0,t])$ for $0\le t\le1$. If $0\le s\le t\le1$, then $F(t)-F(s)=\Leb(D\cap(s,t])$, and hence $0\le F(t)-F(s)\le t-s$. Therefore $F$ is nondecreasing and continuous. Also we have $F(0)=0$ and $F(1)=m$. For $u\in[0,m]$, define $q(u)=\inf\{t\in[0,1]\mid F(t)\ge u\}$. The set in this definition is nonempty because $F(1)=m\ge u$, and hence $q$ is well-defined. If $0\le u\le v\le m$, then $\{t\in[0,1]\mid F(t)\ge v\}\subseteq\{t\in[0,1]\mid F(t)\ge u\}$, so $q(u)\le q(v)$. Thus $q$ is nondecreasing.

We claim that $F(q(u))=u$ for every $u\in[0,m]$. Indeed, by the definition of infimum, for every $n\ge1$ there exists $t_n\in[0,1]$ such that $t_n<q(u)+1/n$ and $F(t_n)\ge u$. Since $F$ is continuous, this gives $F(q(u))\ge u$. If $F(q(u))>u$, then continuity gives some $s<q(u)$ with $F(s)\ge u$, unless $q(u)=0$. The case $q(u)=0$ gives $u=0$ because $F(0)=0$, so it cannot satisfy $F(q(u))>u$. Thus $F(q(u))\le u$. Hence $F(q(u))=u$.

We now prove that $\int_D f(t)\dd t=\int_0^m f(q(u))\dd u$ for every bounded measurable function $f$. For every $t\in[0,1]$, we first show that $\Leb\{u\in[0,m]\mid q(u)\le t\}=F(t)$. If $q(u)\le t$, then $u=F(q(u))\le F(t)$. Conversely, if $u\le F(t)$, then $t\in\{s\in[0,1]\mid F(s)\ge u\}$, and hence $q(u)\le t$. Thus $\{u\in[0,m]\mid q(u)\le t\}$ agrees with $[0,F(t)]$ up to endpoints, so $\Leb\{u\in[0,m]\mid q(u)\le t\}=F(t)$. For each Borel set $A\subseteq[0,1]$, define $\mu(A)=\Leb\{u\in[0,m]\mid q(u)\in A\}$ and $\nu(A)=\Leb(D\cap A)$. The preceding equality implies $\mu([0,t])=\nu([0,t])$ for every $t\in[0,1]$. Since the class of intervals $[0,t]$ determines finite Borel measures on $[0,1]$, it follows that $\mu(A)=\nu(A)$ for every Borel set $A$. After completing the two measures, the same equality holds for every Lebesgue measurable set $A$. Applying this equality first to indicator functions, then to simple functions, and finally to bounded measurable functions, we obtain $\int_D f(t)\dd t=\int_0^m f(q(u))\dd u$ for every bounded measurable function $f$.

Finally, we define $g(u)=q(u)-u$ for $0\le u\le m$ and it satisfies that
\[
g(u)=q(u)-F(q(u))=\Leb([0,q(u)]\setminus D)\leq\Leb([0,1]\setminus D)=1-m. 
\]
Since $F(t)\le t$ for every $t\in[0,1]$, the identity $F(q(u))=u$ gives $u\le q(u)$. Hence $g(u)\ge0$. If $0\le u\le v\le m$, then $q(u)\le q(v)$, and $v-u=F(q(v))-F(q(u))=\Leb(D\cap(q(u),q(v)])\le q(v)-q(u)$. Therefore $g(v)-g(u)=q(v)-q(u)-(v-u)\ge0$, so $g$ is nondecreasing. The identity $\Leb(D\cap[0,q(u)])=u$ follows from $F(q(u))=u$ and the definition of $F$. This proves the lemma.
\end{proof}

The edge constraint becomes a simple integral constraint in the quantile coordinates.

\begin{lemma}\label{lemedgeconstraint}
With the notation of Lemma~\ref{lemquantile}, the edge density of $W_D$ is $m^2+2G$, where $G=\int_0^mg(u)\dd u$. Hence $2\int_Dt\dd t=c$ is equivalent to $G=(c-m^2)/2$.
\end{lemma}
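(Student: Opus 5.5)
The plan is to combine Lemma~\ref{lemthresholdedge} with the change of variables in Lemma~\ref{lemquantile}. By Lemma~\ref{lemthresholdedge}, the edge density of $W_D$ is $2\int_D t\dd t$. The function $f(t)=2t$ is bounded and measurable on $[0,1]$, so Lemma~\ref{lemquantile} applies to it and converts the integral over $D$ into an integral over $[0,m]$ in the quantile coordinate:
\[
2\int_D t\dd t=\int_0^m 2q(u)\dd u .
\]

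Next we substitute the decomposition $q(u)=u+g(u)$ from Lemma~\ref{lemquantile}. This splits the integral as
\[
\int_0^m 2q(u)\dd u=\int_0^m 2u\dd u+2\int_0^m g(u)\dd u=m^2+2G .
\]
The integral of $g$ is finite: $g$ is nonnegative, nondecreasing, and bounded by $1-m$ almost everywhere, so it is measurable and integrable and the split is legitimate. This proves that the edge density of $W_D$ equals $m^2+2G$.

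For the equivalence, note that $2\int_D t\dd t=c$ now reads $m^2+2G=c$, which is the same as $G=(c-m^2)/2$. This is just algebra.

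I do not expect a real obstacle. The only point to check is that Lemma~\ref{lemquantile} is applied to a bounded measurable $f$ on $[0,1]$, and $f(t)=2t$ is such a function. Measurability of $g$ follows from monotonicity.
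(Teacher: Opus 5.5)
Your proposal is correct and matches the paper's proof: apply Lemma~\ref{lemthresholdedge}, change variables via Lemma~\ref{lemquantile}, and split $q(u)=u+g(u)$ to obtain $m^2+2G$. The integrability and measurability remarks you add are harmless details that the paper leaves implicit.
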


\begin{proof}
By Lemmas~\ref{lemthresholdedge} and~\ref{lemquantile}, one has \[t(K_2,W_D)=2\int_Dt\dd t=2\int_0^mq(u)\dd u=2\int_0^m(u+g(u))\dd u=m^2+2G.\]
\end{proof}

The following exact expansion is the point where independent sets of the path enter the proof. Vertices sent outside $D$ must form an independent set, and each such vertex contributes a factor determined by the minimum of the neighbouring quantile variables. For a vertex $i$ of the path, let $N(i)$ denote its set of neighbours in the path. For $I\subseteq V(P_{2r+1})$, let $N(I)=\{v\in V(P_{2r+1})\setminus I\mid v\text{ has a neighbour in }I\}$ denote the open neighbourhood of $I$. 

\begin{lemma}\label{lemexactexpansion}
Let $D\subseteq[0,1]$ be measurable and write $q(u)=u+g(u)$ as in Lemma~\ref{lemquantile}. Then
\[
t(P_{2r+1},W_D)=\sum_{I\in\cI(P_{2r+1})}\int_{[0,m]^{V(P_{2r+1})\setminus I}}\prod_{i\in I}g\left(\min_{j\in N(i)}u_j\right)\prod_{j\in V(P_{2r+1})\setminus I}\dd u_j.
\]
\end{lemma}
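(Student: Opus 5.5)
The plan is to partition $[0,1]^{V(P_{2r+1})}$ according to which coordinates land in $D$, and then to evaluate each piece in quantile coordinates using Lemma~\ref{lemquantile}.

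Write $V=V(P_{2r+1})$ and $W=W_D$, so $W(x,y)=1$ exactly when $\max(x,y)\in D$. For $x\in[0,1]^V$, set $I(x)=\{i\in V\mid x_i\notin D\}$. If $ij$ is an edge with both $x_i,x_j\notin D$, then $\max(x_i,x_j)\notin D$ and the product vanishes. Hence only configurations with $I(x)=I\in\cI(P_{2r+1})$ contribute, and
\[
t(P_{2r+1},W)=\sum_{I\in\cI(P_{2r+1})}\int \prod_{j\notin I}\1_{x_j\in D}\prod_{i\in I}\1_{x_i\notin D}\prod_{ij\in E}\1_{\max(x_i,x_j)\in D}\dd x .
\]
Fix $I$ and sort the edges into two types.

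Edges between two vertices of $V\setminus I$ automatically have $\max\in D$, so their factors equal $1$. Because $I$ is independent, every remaining edge joins some $i\in I$ to a neighbour $j\in N(i)$, and $j\notin I$, so $x_j\in D$. For such an edge, the condition $\max(x_i,x_j)\in D$ with $x_i\notin D$ is equivalent to $x_i<x_j$. The case $x_i=x_j$ is null, and $x_i>x_j$ would force $\max=x_i\notin D$. Consequently, for fixed $(x_j)_{j\notin I}\in D^{V\setminus I}$, the coordinates $x_i$ with $i\in I$ decouple. Integrating each one out gives the factor
\[
\Leb\bigl([0,\textstyle\min_{j\in N(i)}x_j)\setminus D\bigr).
\]

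Next I apply the quantile change of variables from Lemma~\ref{lemquantile} to each $x_j$ with $j\notin I$. This step needs some care. Lemma~\ref{lemquantile} is stated for one variable, so I extend it to product form, $\int_{D^k}f=\int_{[0,m]^k}f(q(u_1),\dots,q(u_k))\,du$, for bounded measurable $f$. To get this, I apply the one-variable statement coordinatewise via Fubini, or equivalently use that the pushforward of $\Leb|_{[0,m]}$ under $q$ is $\Leb|_D$, so product measures push forward to product measures. Since $q$ is nondecreasing, $\min_j q(u_j)=q(\min_j u_j)$. It then remains to show $\Leb([0,q(u))\setminus D)=g(u)$ for almost every $u$. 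This follows from the last assertion of Lemma~\ref{lemquantile}: $\Leb([0,q(u)]\setminus D)=q(u)-\Leb(D\cap[0,q(u)])=q(u)-u=g(u)$, and the endpoint has measure zero.

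The step I expect to be the main obstacle is the a.e.\ identity. It holds only for almost every $u$, whereas it is applied at $u=\min_{j\in N(i)}u_j$. I will handle this by noting that the map $(u_j)\mapsto\min_{j\in N(i)}u_j$ pushes Lebesgue measure on $[0,m]^{V\setminus I}$ to an absolutely continuous measure, so a null exceptional set of $u$ pulls back to a null set of configurations. In fact, $F(q(u))=u$ was shown for every $u$, so the identity may even hold everywhere. Substituting then gives exactly the integral $\int_{[0,m]^{V\setminus I}}\prod_{i\in I}g(\min_{j\in N(i)}u_j)\prod_{j\notin I}du_j$, and summing over $I$ yields the claimed expansion.
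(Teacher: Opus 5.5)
Your proposal is correct and follows the paper's proof essentially step for step: partition by the set of coordinates lying outside $D$, discard non-independent $I$, reduce each cross edge to the indicator of $x_i\le x_j$, integrate out the $I$-coordinates to get $\Leb(D^c\cap[0,\min_{j\in N(i)}x_j])$, and pass to quantile variables using the monotonicity of $q$ and $\Leb(D\cap[0,q(u)])=u$. Your extra care about the almost-everywhere identity is harmless, since the proof of Lemma~\ref{lemquantile} in fact gives $F(q(u))=u$ for every $u$. Also, the case $x_i=x_j$ is not merely null but empty, because $x_i\notin D$ and $x_j\in D$.
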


\begin{proof}
Let $V=V(P_{2r+1})=\{0,1,\ldots,2r\}$ and let $E=E(P_{2r+1})$. Write $D^c=[0,1]\setminus D$. For each $I\subseteq V$, let $J_I=V\setminus I$ and define
\[
R_I=\{(x_v)_{v\in V}\in[0,1]^V\mid x_i\in D^c\text{ for }i\in I,\ x_j\in D\text{ for }j\in J_I\}.
\]
The sets $R_I$ partition $[0,1]^V$, and therefore
\[
t(P_{2r+1},W_D)=\sum_{I\subseteq V}\int_{R_I}\prod_{ab\in E}W_D(x_a,x_b)\prod_{v\in V}\dd x_v.
\]
If $I$ is not independent, then there is an edge $ab\in E$ with $a,b\in I$. On $R_I$ one has $x_a,x_b\in D^c$, so $\max(x_a,x_b)\in D^c$ and hence $W_D(x_a,x_b)=0$. Thus only independent sets $I$ can have a nonzero term, and
\[
t(P_{2r+1},W_D)=\sum_{I\in\cI(P_{2r+1})}\int_{R_I}\prod_{ab\in E}W_D(x_a,x_b)\prod_{v\in V}\dd x_v.
\]

Fix $I\in\cI(P_{2r+1})$. On $R_I$, if $ab\in E$ and $a,b\in J_I$, then $x_a,x_b\in D$, so $\max(x_a,x_b)\in D$ and $W_D(x_a,x_b)=1$. If $i\in I$ and $j\in J_I$ with $ij\in E$, then $x_i\in D^c$ and $x_j\in D$, and
$
W_D(x_i,x_j)=\1_{\{x_i\le x_j\}}.
$
Indeed, if $x_i\le x_j$, then $\max(x_i,x_j)=x_j\in D$; if $x_i>x_j$, then $\max(x_i,x_j)=x_i\in D^c$. Hence
\[
\1_{R_I}(x)\prod_{ab\in E}W_D(x_a,x_b)=\1_{R_I}(x)\prod_{i\in I}\prod_{j\in N(i)}\1_{\{x_i\le x_j\}}.
\]
Therefore the term indexed by $I$ is
\[
\mathcal T_I=\int_{D^{J_I}}\int_{(D^c)^I}\prod_{i\in I}\prod_{j\in N(i)}\1_{\{x_i\le x_j\}}\prod_{i\in I}\dd x_i\prod_{j\in J_I}\dd x_j.
\]
For fixed $(x_j)_{j\in J_I}\in D^{J_I}$, the variables $(x_i)_{i\in I}$ occur separately. Thus
\[
\begin{aligned}
\int_{(D^c)^I}\prod_{i\in I}\prod_{j\in N(i)}\1_{\{x_i\le x_j\}}\prod_{i\in I}\dd x_i
&=\prod_{i\in I}\int_{D^c}\prod_{j\in N(i)}\1_{\{x_i\le x_j\}}\dd x_i\\
&=\prod_{i\in I}\Leb\left(D^c\cap\left[0,\min_{j\in N(i)}x_j\right]\right).
\end{aligned}
\]
Consequently, we obtain that
\[
\mathcal T_I=\int_{D^{J_I}}\prod_{i\in I}\Leb\left(D^c\cap\left[0,\min_{j\in N(i)}x_j\right]\right)\prod_{j\in J_I}\dd x_j.
\]

We now replace the variables in $D$ by quantile variables. Applying Lemma~\ref{lemquantile} successively to the variables indexed by $J_I$ yields
\[
\mathcal T_I=\int_{[0,m]^{J_I}}\prod_{i\in I}\Leb\left(D^c\cap\left[0,\min_{j\in N(i)}q(u_j)\right]\right)\prod_{j\in J_I}\dd u_j.
\]
Since $q$ is nondecreasing, for each $i\in I$ one has
$
\min_{j\in N(i)}q(u_j)=q\left(\min_{j\in N(i)}u_j\right).
$
Let $\sigma_i=\min_{j\in N(i)}u_j$. By Lemma~\ref{lemquantile}, $\Leb(D\cap[0,q(\sigma_i)])=\sigma_i$ and $q(\sigma_i)=\sigma_i+g(\sigma_i)$. Hence
\[
\Leb(D^c\cap[0,q(\sigma_i)])=\Leb([0,q(\sigma_i)])-\Leb(D\cap[0,q(\sigma_i)])=q(\sigma_i)-\sigma_i=g(\sigma_i).
\]
Therefore
\[
\mathcal T_I=\int_{[0,m]^{J_I}}\prod_{i\in I}g\left(\min_{j\in N(i)}u_j\right)\prod_{j\in J_I}\dd u_j.
\]
Summing this identity over all $I\in\cI(P_{2r+1})$ we obtain that 
\[
t(P_{2r+1},W_D)=\sum_{I\in\cI(P_{2r+1})}\int_{[0,m]^{V\setminus I}}\prod_{i\in I}g\left(\min_{j\in N(i)}u_j\right)\prod_{j\in V\setminus I}\dd u_j,
\]
and this proves the lemma.
\end{proof}

The next estimate gives the possible range of the one scalar parameter that measures how far the nondecreasing function $g$ is from being constant. 

\begin{lemma}\label{lemdefectrange}
Let $h>0$ and $m>0$. Let $0\le g\le h$ be nondecreasing on $[0,m]$. Let $G=\int_0^mg(u)\dd u$ and let $J=\int_0^m(u-m/2)g(u)\dd u$. Then
\[
0\le J\le \frac12\left(mG-\frac{G^2}{h}\right).
\]
The lower endpoint is attained by $g\equiv G/m$. The upper endpoint is attained by $g=h\1_{[m-G/h,m]}$.
\end{lemma}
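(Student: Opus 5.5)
The lower bound is a Chebyshev-type covariance inequality. Since $\int_0^m(u-m/2)\dd u=0$, we can write
\[
J=\int_0^m(u-m/2)\bigl(g(u)-g(m/2)\bigr)\dd u .
\]
Here $g(m/2)$ is understood as any value between the left and right limits of $g$ at $m/2$. Because $g$ is nondecreasing, the two factors $u-m/2$ and $g(u)-g(m/2)$ have the same sign for every $u$. The integrand is therefore pointwise nonnegative, which gives $J\ge0$. If $g\equiv G/m$ is constant, then $J=(G/m)\int_0^m(u-m/2)\dd u=0$, so the lower endpoint is attained.

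For the upper bound we use a bathtub (rearrangement) argument. It is cleanest to maximize $\int_0^m u\,g(u)\dd u$ over the larger class of measurable $g$ with $0\le g\le h$ and $\int g=G$, dropping monotonicity. Since $J=\int u\,g-\tfrac m2G$, this does the job. Let $g^*=h\1_{[m-G/h,m]}$, which has the same integral $G$. This requires $G\le hm$, which holds since $g\le h$. We then have
\[
\int_0^m u\bigl(g^*(u)-g(u)\bigr)\dd u=\int_0^m \bigl(u-(m-G/h)\bigr)\bigl(g^*(u)-g(u)\bigr)\dd u .
\]
The equality holds because $\int(g^*-g)=0$. On $[m-G/h,m]$ both factors are nonnegative, since $g^*=h\ge g$ there. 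On $[0,m-G/h)$ both factors are nonpositive, since $g^*=0\le g$ there. So the integrand is nonnegative, and $\int u\,g\le\int u\,g^*$.

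It remains to compute the value at $g^*$. Put $\ell=G/h$. Then
\[
\int_0^m u\,g^*(u)\dd u=h\cdot\frac{m^2-(m-\ell)^2}{2}=h\ell m-\frac{h\ell^2}{2}=mG-\frac{G^2}{2h}.
\]
Subtracting $\tfrac m2G$ gives $J\le \tfrac12\bigl(mG-G^2/h\bigr)$. The extremizer $g^*$ is nondecreasing, nonnegative and bounded by $h$, so it is admissible and the upper endpoint is attained.

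No step is a real obstacle. The only points needing care are the degenerate cases $G=0$ and $G=hm$, where the interval $[m-G/h,m]$ is empty or all of $[0,m]$; both reduce directly to the same computation. One should also note that the value of $g$ at the single point $m/2$ is irrelevant to the integral.
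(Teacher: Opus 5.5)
Your proof is correct, but it takes a different route from the paper's. The paper uses one layer-cake computation. With $L(t)=\Leb\{u\in[0,m]\mid g(u)\ge t\}$, monotonicity makes each superlevel set the right interval $[m-L(t),m]$. This gives the exact identities $G=\int_0^hL(t)\dd t$ and $J=\frac12\int_0^hL(t)(m-L(t))\dd t$. The lower bound then follows from $0\le L\le m$, and the upper bound is Jensen's inequality for the concave function $x\mapsto x(m-x)$.

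You prove the two bounds separately. For the lower bound you use Chebyshev's covariance inequality for the comonotone pair $u-m/2$ and $g$. For the upper bound you use the bathtub principle, comparing pointwise with $g^*$ after subtracting the multiplier $m-G/h$.

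Your route is more elementary and proves slightly more. The upper bound holds for every measurable $0\le g\le h$ with $\int_0^m g=G$, so monotonicity is needed only for $J\ge0$. The pointwise nonnegativity of your integrand also makes the equality case ($g=g^*$ almost everywhere) transparent.

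The paper's route gives a single exact formula for $J$ in terms of the distribution function $L$. That same bookkeeping is reused in the proof of Lemma~\ref{lemmoment}, where $\int_0^1y^2$ is written as $\int_0^1\int_0^1\min\{L(t),L(u)\}\dd t\dd u$ and compared with $2\eta=\int_0^1L(1-L)$. So the paper's version fits the surrounding argument better, while yours is self-contained.
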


\begin{proof}
For $0\le t\le h$, let $L(t)=\Leb\{u\in[0,m]\mid g(u)\ge t\}$. Since $g$ is nondecreasing, the set $\{u\in[0,m]\mid g(u)\ge t\}$ agrees up to null sets with the right interval $[m-L(t),m]$. Since $0\le g\le h$, one has $g(u)=\int_0^h\1_{\{t\le g(u)\}}\dd t$. Hence, by Fubini's theorem,
\[
G=\int_0^m g(u)\dd u=\int_0^m\int_0^h\1_{\{t\le g(u)\}}\dd t\dd u=\int_0^h\int_0^m\1_{\{g(u)\ge t\}}\dd u\dd t=\int_0^h L(t)\dd t.
\]
Similarly,
\[
\begin{aligned}
J
&=\int_0^m\left(u-\frac{m}{2}\right)g(u)\dd u=\int_0^h\int_0^m\left(u-\frac{m}{2}\right)\1_{\{g(u)\ge t\}}\dd u\dd t\\
&=\int_0^h\int_{m-L(t)}^m\left(u-\frac{m}{2}\right)\dd u\dd t=\frac{1}{2}\int_0^h L(t)(m-L(t))\dd t.
\end{aligned}
\]
Since $0\le L(t)\le m$, the last expression implies $J\ge0$. Let $\phi(x)=x(m-x)$ for $0\le x\le m$. Then $\phi$ is concave. By Jensen's inequality,
\[
J=\frac{1}{2}\int_0^h\phi(L(t))\dd t\le \frac{h}{2}\,\phi\left(\frac{1}{h}\int_0^hL(t)\dd t\right)=\frac{1}{2}\left(mG-\frac{G^2}{h}\right).
\]
It remains to check the two stated equality cases. Since $0\le G\le mh$, the function $g\equiv G/m$ satisfies $0\le g\le h$ and is nondecreasing. For this function,
\[
J=\int_0^m\left(u-\frac{m}{2}\right)\frac{G}{m}\dd u=\frac{G}{m}\left[\frac{1}{2}u^2-\frac{m}{2}u\right]_0^m=0.
\]
The function $g=h\1_{[m-G/h,m]}$ also satisfies $0\le g\le h$ and is nondecreasing. For this function,
\[
J=h\int_{m-G/h}^m\left(u-\frac{m}{2}\right)\dd u=h\left[\frac{1}{2}u^2-\frac{m}{2}u\right]_{m-G/h}^m=\frac{1}{2}\left(mG-\frac{G^2}{h}\right).
\]
This proves the lemma.
\end{proof}

We shall need one moment estimate for the unique largest independent set of the odd path.

\begin{lemma}\label{lemmoment}
Let $h>0$ and $m>0$. Let $0\le g\le h$ be nondecreasing on $[0,m]$. Let $G=\int_0^mg(u)\dd u$ and let $J=\int_0^m(u-m/2)g(u)\dd u$. Then
\[
\left(\int_0^m g(u)^{3/2}\dd u\right)^2\le \frac{G^3}{m}+\frac{2hG}{m}J.
\]
\end{lemma}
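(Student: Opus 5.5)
The plan is to split the estimate into a Cauchy--Schwarz step and a variance-type bound. The variance-type bound is then handled with the same layer-cake representation used in the proof of Lemma~\ref{lemdefectrange}.

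First, write $g^{3/2}=g\cdot g^{1/2}$ and apply Cauchy--Schwarz with the measure $g(u)\dd u$:
\[
\left(\int_0^m g^{3/2}\dd u\right)^2\le \int_0^m g\dd u\int_0^m g^2\dd u = G\int_0^m g^2\dd u .
\]
It then suffices to prove
\[
m\int_0^m g^2\dd u - G^2\le 2hJ,
\]
because multiplying this by $G/m\ge0$ gives exactly the claimed bound.

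For this second inequality I will use the distribution function $L(t)=\Leb\{u\in[0,m]\mid g(u)\ge t\}$ for $0\le t\le h$. As in Lemma~\ref{lemdefectrange}, monotonicity of $g$ means the set $\{g\ge t\}$ is, up to null sets, the right interval $[m-L(t),m]$. Hence, by Fubini's theorem,
\[
\int_0^m g^2\dd u=\int_0^h\!\!\int_0^h \min(L(t),L(t'))\dd t\dd t',\qquad G^2=\int_0^h\!\!\int_0^h L(t)L(t')\dd t\dd t'.
\]
Lemma~\ref{lemdefectrange} also gives $J=\frac12\int_0^h L(t)(m-L(t))\dd t$. Symmetrizing this over $(t,t')\in[0,h]^2$ yields
\[
2hJ=\int_0^h\!\!\int_0^h \tfrac12\bigl[L(t)(m-L(t))+L(t')(m-L(t'))\bigr]\dd t\dd t'.
\]

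The problem now reduces to a pointwise inequality. Let $a=\min(L(t),L(t'))$ and $b=\max(L(t),L(t'))$, so that $0\le a\le b\le m$. We need
\[
ma-ab\le \tfrac12\bigl[a(m-a)+b(m-b)\bigr].
\]
This rearranges to $0\le m(b-a)-(b-a)^2=(b-a)(m-(b-a))$, which holds because $0\le b-a\le m$. Integrating over $[0,h]^2$ gives $m\int g^2-G^2\le 2hJ$ and completes the argument.

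The only delicate point is recognising that one should pass through $\int g^2$ via Cauchy--Schwarz. After that, the layer-cake representation already set up in Lemma~\ref{lemdefectrange} reduces everything to the elementary two-variable inequality above. The upper bound $g\le h$ enters only in that the $t$-integrals run over $[0,h]$, which produces the factor $h$.
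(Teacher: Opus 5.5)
Your proof is correct and takes essentially the same route as the paper. Both apply Cauchy--Schwarz to reduce to a bound on $\int g^2$, then use the layer-cake identity $\int g^2=\iint\min\{L(t),L(t')\}$ and the same pointwise inequality $\min\{X,Y\}-XY\le\frac12X(m-X)+\frac12Y(m-Y)$. The only difference is that the paper first rescales to $y(s)=g(ms)/h$ on $[0,1]$, whereas you carry $m$ and $h$ through directly, which is equally valid.
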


\begin{proof}
For $0\le s\le1$, define $y(s)$ by $g(ms)=hy(s)$. Then $0\le y\le1$ and $y$ is nondecreasing on $[0,1]$. Let $\mu=\int_0^1y(s)\dd s$ and $\eta=\int_0^1(s-\frac{1}{2})y(s)\dd s$. We have
\[
G=\int_0^m g(u)\dd u=m\int_0^1 g(ms)\dd s=mh\int_0^1y(s)\dd s=mh\mu,
\]
and
\[
J=\int_0^m\left(u-\frac{m}{2}\right)g(u)\dd u=m^2h\int_0^1\left(s-\frac{1}{2}\right)y(s)\dd s=m^2h\eta.
\]
Moreover, $
\left(\int_0^m g(u)^{3/2}\dd u\right)^2=m^2h^3\left(\int_0^1y(s)^{3/2}\dd s\right)^2$ and 
therefore, after substituting these three identities and dividing by $m^2h^3$, it suffices to show that
$
\left(\int_0^1y(s)^{3/2}\dd s\right)^2\le \mu^3+2\mu\eta.
$ 
By Cauchy's inequality,
\[
\left(\int_0^1y(s)^{3/2}\dd s\right)^2\le \left(\int_0^1y(s)\dd s\right)\left(\int_0^1y(s)^2\dd s\right)=\mu\int_0^1y(s)^2\dd s.
\]
It remains to prove
$
\int_0^1y(s)^2\dd s\le \mu^2+2\eta.
$
For $0\le t\le1$, let $L(t)=\Leb\{s\in[0,1]\mid y(s)\ge t\}$. Since $y$ is nondecreasing, the set $\{s\in[0,1]\mid y(s)\ge t\}$ agrees up to null sets with the right interval $[1-L(t),1]$. Since $0\le y\le1$, by Fubini's theorem,
\[
\mu=\int_0^1y(s)\dd s=\int_0^1\int_0^1\1_{\{t\le y(s)\}}\dd t\dd s=\int_0^1L(t)\dd t.
\]
Similarly,
\[
\begin{aligned}
\eta
&=\int_0^1\left(s-\frac{1}{2}\right)y(s)\dd s=\int_0^1\int_0^1\left(s-\frac{1}{2}\right)\1_{\{t\le y(s)\}}\dd t\dd s\\
&=\int_0^1\int_{1-L(t)}^1\left(s-\frac{1}{2}\right)\dd s\dd t=\frac{1}{2}\int_0^1L(t)(1-L(t))\dd t.
\end{aligned}
\]
Also,
\[
\begin{aligned}
\int_0^1y(s)^2\dd s
&=\int_0^1\int_0^1\int_0^1\1_{\{t\le y(s)\}}\1_{\{u\le y(s)\}}\dd t\dd u\dd s\\
&=\int_0^1\int_0^1\Leb\{s\in[0,1]\mid y(s)\ge t\text{ and }y(s)\ge u\}\dd t\dd u\\
&=\int_0^1\int_0^1\min\{L(t),L(u)\}\dd t\dd u.
\end{aligned}
\]
Since $\mu=\int_0^1L(t)\dd t$, we have that 
$\mu^2=\int_0^1\int_0^1L(t)L(u)\dd t\dd u.$
For $X,Y\in[0,1]$, one has
\[
\min\{X,Y\}-XY\le \frac{1}{2}X(1-X)+\frac{1}{2}Y(1-Y).
\]
Applying this inequality with $X=L(t)$ and $Y=L(u)$, we obtain
\[
\begin{aligned}
\int_0^1y(s)^2\dd s-\mu^2
&=\int_0^1\int_0^1\left(\min\{L(t),L(u)\}-L(t)L(u)\right)\dd t\dd u\\
&\le \int_0^1\int_0^1\left(\frac{1}{2}L(t)(1-L(t))+\frac{1}{2}L(u)(1-L(u))\right)\dd t\dd u\\
&=\int_0^1L(t)(1-L(t))\dd t=2\eta.
\end{aligned}
\]
Thus $\int_0^1y(s)^2\dd s\le \mu^2+2\eta$. Hence $\left(\int_0^1y(s)^{3/2}\dd s\right)^2\le \mu^3+2\mu\eta$. Scaling back proves the lemma.
\end{proof}

We now describe the two endpoint graphons that arise from Lemma~\ref{lemdefectrange}. Let $0<m<1$ and let $0<G<m(1-m)$. Let $D_2=[G/m,G/m+m]$ and $D_3=[0,m-G/(1-m)]\cup[1-G/(1-m),1]$. Both threshold graphons $W_{D_2}$ and $W_{D_3}$ have edge density $m^2+2G$. We write $T_r^{(2)}(m,G)=t(P_{2r+1},W_{D_2})$ and $T_r^{(3)}(m,G)=t(P_{2r+1},W_{D_3})$.

The following theorem is the collapse step. It says that after fixing $m$ and $G$, no threshold graphon is better than one of the two endpoint choices above.

\begin{theorem}\label{thmonedefectcollapse}
Let $r\ge2$ and let $W_D$ be a threshold graphon. Let $m=\Leb(D)$, let $q(u)=u+g(u)$ be as in Lemma~\ref{lemquantile}, and let $G=\int_0^mg(u)\dd u$. If $0<m<1$ and $0<G<m(1-m)$, then
\[
t(P_{2r+1},W_D)\le \max\{T_r^{(2)}(m,G),T_r^{(3)}(m,G)\}.
\]
The boundary cases follow by continuity.
\end{theorem}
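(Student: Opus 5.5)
The plan is to work entirely in quantile coordinates. By Lemma~\ref{lemexactexpansion}, $t(P_{2r+1},W_D)$ is a sum over $I\in\cI(P_{2r+1})$ of functionals $\mathcal T_I(g)$ of the nondecreasing function $g$, which satisfies $0\le g\le h:=1-m$ (Lemma~\ref{lemquantile}). The two endpoint sets are exactly the two extremal profiles of Lemma~\ref{lemdefectrange}:
\begin{itemize}
\item $D_2=[G/m,G/m+m]$ corresponds to $g\equiv G/m$, that is, to defect $J=0$;
\item $D_3$ corresponds to $g=h\1_{[m-G/h,m]}$ with $h=1-m$, that is, to the maximal defect $J_{\max}=\frac12(mG-G^2/h)$.
\end{itemize}
So, with $m$ and $G$ fixed (the edge constraint of Lemma~\ref{lemedgeconstraint}), the only free parameter I will try to track is the defect $J=\int_0^m(u-m/2)g(u)\dd u\in[0,J_{\max}]$.

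The target is an interpolation inequality: writing $\theta=J/J_{\max}\in[0,1]$, I aim to show that for every $I$,
\[
\mathcal T_I(g)\le (1-\theta)\,\mathcal T_I(G/m)+\theta\,\mathcal T_I\bigl(h\1_{[m-G/h,m]}\bigr).
\]
More modestly, it would suffice to prove $\sum_I\mathcal T_I(g)\le \Phi(J)$ for some function $\Phi$ that is convex in $J$ and agrees with $T_r^{(2)}$ at $J=0$ and with $T_r^{(3)}$ at $J=J_{\max}$. Summing over $I$ then gives $t(P_{2r+1},W_D)\le\max\{T_r^{(2)}(m,G),T_r^{(3)}(m,G)\}$, because an affine or convex function of $J$ is maximised at an endpoint.

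I will treat the terms by the size of $I$.
\begin{itemize}
\item $I=\emptyset$ gives $m^{2r+1}$, which is independent of $g$.
\item For $|I|=1$, integrating out the free variables turns $\mathcal T_I$ into $\int_0^m g(u)w(u)\dd u$ with an explicit polynomial weight. For an endpoint vertex the weight is $w\equiv m^{2r-1}$. For an interior vertex it is $w(u)=2(m-u)m^{2r-2}$, since $\min(u_a,u_b)$ has density $2(m-u)/m^2$. Any polynomial weight of degree $\le1$ makes the term an affine function of $G$ and $J$, so these terms are exactly affine in $J$ and match both endpoints.
\item For $|I|\ge2$, the factors $g(\min_{j\in N(i)}u_j)$ are coupled only through shared path neighbours. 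Integrating the $J_I$-variables in order along the path produces an iterated expression, a chain of kernels applied to $g$. There I plan to apply two tools. First, the FKG/Chebyshev rearrangement inequality for the nondecreasing functions $g\circ\min$. Second, the layer-cake representation $g=\int_0^h\1_{[m-L(t),m]}\dd t$ used in Lemmas~\ref{lemdefectrange} and~\ref{lemmoment}. Together these reduce each multilinear term to an integral over levels of products of $L(t_i)$'s, and I then bound $\min$-type interactions by expressions in $L(t)(m-L(t))$, i.e.\ in $G$ and $J$, as in the proof of Lemma~\ref{lemmoment}.
\item For the unique maximum independent set (the $r+1$ even vertices), the term involves $g$ evaluated at every second position. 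Here I expect to need Cauchy--Schwarz together with Lemma~\ref{lemmoment}, which controls $(\int g^{3/2})^2$ by $G^3/m+2hGJ/m$. That bound is affine in $J$, and it is tight at both endpoint profiles.
\end{itemize}

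The main obstacle is the multi-factor terms. Affine upper bounds in $J$ that are simultaneously tight at both extremal profiles are delicate, because rearrangement-type estimates are typically tight only at one end. If exact affine bounds per term fail, my fallback is to compare the whole sum against the chord. Concretely, I would set $g_\theta$ to be the mixture profile with defect $J$ whose layer function $L$ takes only the values $m$ and $G/h$, and show that the functional increases under the Jensen-type step $L\mapsto$ (two-valued $L$) from Lemma~\ref{lemdefectrange}. This reduces the problem to two-valued $L$, where $t$ is an explicit polynomial in $\theta$, and I would then verify convexity in $\theta$ directly. The hypothesis $r\ge2$ presumably enters only here, to ensure enough interior independent vertices for this convexity. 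The boundary cases $m\in\{0,1\}$ or $G\in\{0,m(1-m)\}$ follow by continuity of both sides in $(m,G)$.
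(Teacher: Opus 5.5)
\textbf{Verdict: the proposal has a genuine gap.} Your framework is the right one, and it is the one the paper uses. You bound each $\mathcal T_I$ by a function of $J$ that is convex on $[0,J_{\max}]$ and exact at $g\equiv G/m$ and at $g=h\1_{[m-G/h,m]}$, then maximise at an endpoint. Your treatment of $|I|\le1$ is correct. For $I_0$, the reduction to Lemma~\ref{lemmoment} also matches the paper. The step there is the pointwise bound $g(\min(u_j,u_{j+1}))=\min\{g(u_j),g(u_{j+1})\}\le\sqrt{g(u_j)g(u_{j+1})}$. This is also where $r\ge2$ is used: the two end factors must sit on distinct variables $u_1\ne u_r$.

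The gap is the middle range. For independent sets $I\ne I_0$ with $|I|\ge2$, you never produce the required bound, and the tools you name do not supply it:
\begin{itemize}
\item The chain factors $g(\min(a_k,a_{k+1}))$ are all coordinatewise nondecreasing. Harris/FKG and Chebyshev therefore give \emph{lower} bounds for their joint integral, not upper bounds.
\item Bounding the layer-cake interactions $\min\{L(s),L(s')\}$ as in Lemma~\ref{lemmoment} leaves higher moments of $L$, such as $\int L^3$. These are not functions of $G$ and $J$, and you give no way to control them.
\item Your fallback claims that $t(P_{2r+1},W_D)$ does not decrease when $L$ is replaced by a two-valued layer function with the same $G$ and $J$. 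That is an unproved extremal statement over all nondecreasing $g$, essentially as hard as the theorem itself.
\end{itemize}

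\textbf{The missing idea is a crude but exact factorisation.}
\begin{enumerate}
\item Split $I\ne I_0$ into maximal distance-two chains. Let $\nu(I)=|N(I)|-|I|$, which is the number of chains meeting neither endpoint of the path. No chain meets both endpoints, since $I\ne I_0$.
\item In each interior chain, keep exactly one two-neighbour factor $g(\min(u,v))$. Bound every other factor $g(\min_{j\in N(i)}u_j)$ by $g(u_{\phi(i)})$. Here $\phi$ sends vertices left of the kept one to their left neighbours and vertices right of it to their right neighbours. For a chain touching an endpoint, match every vertex to its neighbour away from that endpoint.
\item Then $\phi$ is injective and avoids the kept pairs, so the variables separate and $\mathcal T_I\le B^{\nu(I)}G^{|I|-\nu(I)}m^{2r+1-|I|-|N(I)|}$. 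Here $B=\int_{[0,m]^2}g(\min(u,v))\dd u\dd v=mG-2J$; in your layer-cake notation, $B=\int_0^hL(t)^2\dd t$.
\item On $[0,J_{\max}]$ one has $B\ge G^2/h>0$, so this bound is a nonnegative-coefficient polynomial in a positive affine function of $J$. Hence it is convex in $J$.
\item The bound is an equality for constant $g$. It is also an equality for $g=h\1_Q$ with $Q$ a right interval, because then $g(\min(a,b))=h\1_Q(a)\1_Q(b)$ and the matched neighbours exhaust the remaining chain neighbours.
\end{enumerate}
Adding the affine $I_0$ bound gives a convex $\Psi(J)$ with $\Psi(0)=T_r^{(2)}(m,G)$ and $\Psi(J_{\max})=T_r^{(3)}(m,G)$, term by term. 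In particular this yields your per-term chord inequality for every $I$. No affine bounds and no reduction to two-valued $L$ are needed.
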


\begin{proof}
Let $J=\int_0^m(u-\frac{m}{2})g(u)\dd u$ and let $B=mG-2J$. Since $0\le g\le1-m$, Lemma~\ref{lemdefectrange} applied with $h=1-m$ yields $0\le J\le J_{\max}$, where
$
J_{\max}=\frac{1}{2}\left(mG-\frac{G^2}{1-m}\right).
$
Moreover, notice that 
\[
B=mG-2J=m\int_0^m g(u)\dd u-2\int_0^m\left(u-\frac{m}{2}\right)g(u)\dd u=2\int_0^m(m-u)g(u)\dd u.
\]
Hence
\[
\begin{aligned}
\int_{[0,m]^2}g(\min(u,v))\dd u\dd v
&=\int_0^m\int_0^u g(v)\dd v\dd u+\int_0^m\int_u^m g(u)\dd v\dd u\\
&=\int_0^m(m-v)g(v)\dd v+\int_0^m(m-u)g(u)\dd u\\
&=2\int_0^m(m-u)g(u)\dd u=B.
\end{aligned}
\]
Let $V=V(P_{2r+1})=\{0,1,\ldots,2r\}$ and let $I_0=\{0,2,\ldots,2r\}$ be the unique independent set of size $r+1$ in $P_{2r+1}$. By Lemma~\ref{lemexactexpansion}, for each $I\in\cI(P_{2r+1})$ let
\[
\mathcal T_I=\int_{[0,m]^{V\setminus I}}\prod_{i\in I}g\left(\min_{j\in N(i)}u_j\right)\prod_{j\in V\setminus I}\dd u_j.
\]
Then
\[
t(P_{2r+1},W_D)=\sum_{I\in\cI(P_{2r+1})}\mathcal T_I.
\]

We first consider $I\in\cI(P_{2r+1})$ with $I\ne I_0$. Decompose $I$ into maximal chains under distance-two adjacency along the path. Thus each chain has the form $\{v,v+2,\ldots,v+2(t-1)\}$. If such a chain meets neither endpoint $0$ nor endpoint $2r$, then it has $t+1$ neighbours. If such a chain meets exactly one endpoint, then it has $t$ neighbours. Since $I\ne I_0$, no chain meets both endpoints. Hence $\nu(I)=|N(I)|-|I|$ is the number of chains which meet neither endpoint.

Choose one vertex from each chain which meets neither endpoint, and let $R$ be the set of these chosen vertices. Then $|R|=\nu(I)$. The two-neighbour sets $N(i)$ for $i\in R$ are pairwise disjoint. We next choose an injective map $\phi:I\setminus R\to N(I)\setminus\bigcup_{i\in R}N(i)$ with $\phi(i)\in N(i)$ for every $i\in I\setminus R$. This is done chain by chain. If a chain meets the left endpoint, match every vertex in the chain to its right neighbour. If a chain meets the right endpoint, match every vertex in the chain to its left neighbour. If a chain $\{v,v+2,\ldots,v+2(t-1)\}$ meets neither endpoint and the chosen vertex is $v+2k$, remove the two neighbours $v+2k-1$ and $v+2k+1$, match the vertices to the left of $v+2k$ to their left neighbours, and match the vertices to the right of $v+2k$ to their right neighbours. The chainwise matchings use precisely the remaining neighbours of the chains, and different chains have disjoint neighbourhoods.

For $i\in I\setminus R$, by the monotonicity of $g$, we have 
$
g\left(\min_{j\in N(i)}u_j\right)\le g(u_{\phi(i)}).
$
Therefore
\[
\begin{aligned}
\mathcal T_I
&=\int_{[0,m]^{V\setminus I}}\prod_{i\in R}g\left(\min_{j\in N(i)}u_j\right)\prod_{i\in I\setminus R}g\left(\min_{j\in N(i)}u_j\right)\prod_{j\in V\setminus I}\dd u_j\\
&\le \int_{[0,m]^{V\setminus I}}\prod_{i\in R}g\left(\min_{j\in N(i)}u_j\right)\prod_{i\in I\setminus R}g(u_{\phi(i)})\prod_{j\in V\setminus I}\dd u_j.
\end{aligned}
\]
The variables used in the two-neighbour factors indexed by $R$ are disjoint from the variables used in the one-neighbour factors indexed by $I\setminus R$. Hence the integral factors as
\[
\begin{aligned}
\mathcal T_I
&\le \prod_{i\in R}\int_{[0,m]^2}g(\min(u,v))\dd u\dd v\prod_{i\in I\setminus R}\int_0^m g(u)\dd u\cdot m^{|V|-|I|-|N(I)|}\\
&=B^{|R|}G^{|I|-|R|}m^{2r+1-|I|-|N(I)|}=B^{\nu(I)}G^{2|I|-|N(I)|}m^{2r+1-|I|-|N(I)|}.
\end{aligned}
\]

It remains to estimate $\mathcal T_{I_0}$. By Lemma~\ref{lemexactexpansion},
\[
\mathcal T_{I_0}=\int_{[0,m]^r}g(u_1)g(u_r)\prod_{j=1}^{r-1}g(\min(u_j,u_{j+1}))\dd u_1\cdots\dd u_r.
\]
Since $g$ is nondecreasing, one has
$
g(\min(u_j,u_{j+1}))=\min\{g(u_j),g(u_{j+1})\}\le\sqrt{g(u_j)g(u_{j+1})}.
$
Hence
\[
\mathcal T_{I_0}
\le \int_{[0,m]^r}g(u_1)^{3/2}g(u_r)^{3/2}\prod_{j=2}^{r-1}g(u_j)\dd u_1\cdots\dd u_r=\left(\int_0^m g(u)^{3/2}\dd u\right)^2G^{r-2}.
\]
Applying Lemma~\ref{lemmoment} with $h=1-m$, we obtain
$
\mathcal T_{I_0}\le \frac{G^{r+1}}{m}+\frac{2(1-m)G^{r-1}}{m}J.
$
Combining the estimates of $I\neq I_0$, we have
$
t(P_{2r+1},W_D)\le\Psi_{r,m,G}(J),
$
where
\[
\Psi_{r,m,G}(J)=\sum_{\substack{I\in\cI(P_{2r+1})\\ I\ne I_0}}(mG-2J)^{\nu(I)}G^{2|I|-|N(I)|}m^{2r+1-|I|-|N(I)|}+\frac{G^{r+1}}{m}+\frac{2(1-m)G^{r-1}}{m}J.
\]
For $0\le J\le J_{\max}$, $
mG-2J\ge mG-2J_{\max}=\frac{G^2}{1-m}>0.$
Each term $(mG-2J)^q$ with $q\ge0$ is constant, linear, or convex on $[0,J_{\max}]$, and the final term is linear. Hence $\Psi_{r,m,G}$ is convex on $[0,J_{\max}]$. Therefore,
\[
\Psi_{r,m,G}(J)\le\max\{\Psi_{r,m,G}(0),\Psi_{r,m,G}(J_{\max})\}.
\]

Next, we determine the values of $\Psi_{r,m,G}(0)$ and $\Psi_{r,m,G}(J_{\max})$.

\begin{claim}\label{D_2}
$\Psi_{r,m,G}(0)=T_r^{(2)}(m,G)$.
\end{claim}

\begin{proof}[Proof of Claim~\ref{D_2}]
Let $D_2=[G/m,G/m+m]$. Since $0<G<m(1-m)$, this interval is contained in $[0,1]$. The corresponding quantile obtained from Lemma~\ref{lemquantile} is $q_2(u)=u+G/m$ for $0\le u\le m$, and hence $g_2(u)=G/m$. By Lemma~\ref{lemexactexpansion},
\[
\begin{aligned}
T_r^{(2)}(m,G)
&=t(P_{2r+1},W_{D_2})=\sum_{I\in\cI(P_{2r+1})}\int_{[0,m]^{V\setminus I}}\prod_{i\in I}\frac{G}{m}\prod_{j\in V\setminus I}\dd u_j\\
&=\sum_{I\in\cI(P_{2r+1})}\left(\frac{G}{m}\right)^{|I|}m^{2r+1-|I|}=\sum_{I\in\cI(P_{2r+1})}G^{|I|}m^{2r+1-2|I|}\\ &=\Psi_{r,m,G}(0).
\end{aligned}
\]
\end{proof}

\begin{claim}\label{D_3}
$\Psi_{r,m,G}(J_{\max})=T_r^{(3)}(m,G)$.
\end{claim}

\begin{proof}[Proof of Claim~\ref{D_3}]
Let $\rho=G/(1-m)$ and let $Q=[m-\rho,m]$. Since $0<G<m(1-m)$, one has $0<\rho<m$. Let $D_3=[0,m-\rho]\cup[1-\rho,1]$. The corresponding quantile obtained from Lemma~\ref{lemquantile} satisfies $g_3(u)=(1-m)\1_Q(u)$. Moreover, we have $mG-2J_{\max}=\frac{G^2}{1-m}$. For $I\in\cI(P_{2r+1})$, let $\mathcal T_I^{(3)}$ denote the term indexed by $I$ in the expansion of $t(P_{2r+1},W_{D_3})$ from Lemma~\ref{lemexactexpansion}.

We first compute $\mathcal T_I^{(3)}$ for $I\ne I_0$. Fix such an independent set $I$ and decompose it into maximal distance-two chains. Consider a chain which meets neither endpoint and contains $t$ vertices of $I$. Its $t+1$ neighbour variables will be denoted by $a_0,\ldots,a_t$. For this chain,
\[
\prod_{k=0}^{t-1}g_3(\min(a_k,a_{k+1}))=(1-m)^t\prod_{k=0}^t\1_Q(a_k).
\]
Hence the integral over these $t+1$ neighbour variables is $(1-m)^t\rho^{t+1}=\frac{G^2}{1-m}G^{t-1}$. Now consider a chain which meets exactly one endpoint and contains $t$ vertices of $I$. It has $t$ neighbour variables, denoted by $a_1,\ldots,a_t$. For this chain, the product is nonzero exactly when all these variables lie in $Q$, and the integral over these $t$ variables is $(1-m)^t\rho^t=G^t$. Multiplying over all chains of $I$ and integrating the variables outside $N(I)$, we obtain
\[
\mathcal T_I^{(3)}=\left(\frac{G^2}{1-m}\right)^{\nu(I)}G^{2|I|-|N(I)|}m^{2r+1-|I|-|N(I)|}.
\]
Since $mG-2J_{\max}=G^2/(1-m)$, the value of $\mathcal T_I^{(3)}$ is exactly the term indexed by $I$ in $\Psi_{r,m,G}(J_{\max})$.

It remains to compute $\mathcal T_{I_0}^{(3)}$. By Lemma~\ref{lemexactexpansion}, after reindexing the odd neighbour variables,
\[
\mathcal T_{I_0}^{(3)}=\int_{[0,m]^r}g_3(u_1)g_3(u_r)\prod_{j=1}^{r-1}g_3(\min(u_j,u_{j+1}))\dd u_1\cdots\dd u_r.
\]
Since $Q$ is a right interval, $g_3(\min(u_j,u_{j+1}))$ is nonzero exactly when both $u_j$ and $u_{j+1}$ lie in $Q$. Therefore
\[
g_3(u_1)g_3(u_r)\prod_{j=1}^{r-1}g_3(\min(u_j,u_{j+1}))=(1-m)^{r+1}\prod_{j=1}^r\1_Q(u_j).
\]
It follows that
$
\mathcal T_{I_0}^{(3)}=(1-m)^{r+1}\rho^r=(1-m)G^r.
$
On the other hand, the final two terms of $\Psi_{r,m,G}(J_{\max})$ satisfy
\[
\frac{G^{r+1}}{m}+\frac{2(1-m)G^{r-1}}{m}J_{\max}
=\frac{G^{r+1}}{m}+\frac{(1-m)G^{r-1}}{m}\left(mG-\frac{G^2}{1-m}\right)=(1-m)G^r.
\]
Thus $\mathcal T_{I_0}^{(3)}$ is equal to the final two terms of $\Psi_{r,m,G}(J_{\max})$. Therefore $\Psi_{r,m,G}(J_{\max})=T_r^{(3)}(m,G)$.
\end{proof}

By Claims~\ref{D_2} and~\ref{D_3},
\[
t(P_{2r+1},W_D)\le\Psi_{r,m,G}(J)\le\max\{T_r^{(2)}(m,G),T_r^{(3)}(m,G)\}.
\]
The boundary cases follow by taking limits from the region $0<m<1$ and $0<G<m(1-m)$, since all quantities involved are continuous in the corresponding threshold parameters.
\end{proof}

\section{Proof of the main theorem}\label{secproof}

We first treat the interval endpoint. This is the endpoint in the collapse theorem where the quantile function has constant gap.

\begin{lemma}\label{lemtwoblockendpoint}
Let $r\ge1$, let $D=[a,b]\subseteq[0,1]$, and let $c=t(K_2,W_D)$. Then
\[
t(P_{2r+1},W_D)\le \max\{S_r(c),c^{r+1/2}\}.
\]
\end{lemma}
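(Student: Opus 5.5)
The plan is to parametrize the interval graphon by a single ratio, write its path density as a fixed power of $c$ times a one-variable function, and show that this function is maximized at the two ends of its allowed range, which are exactly the quasi-clique and the quasi-star.

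First I compute $t(P_{2r+1},W_D)$ explicitly. Write $m=b-a$. We may assume $m>0$, since otherwise $c=0$ and the claim is trivial. For $D=[a,b]$, Lemma~\ref{lemquantile} gives $q(u)=u+a$, so $g\equiv a$. By Lemma~\ref{lemexactexpansion}, together with the count $\binom{2r+2-j}{j}$ of independent $j$-sets in $P_{2r+1}$,
\[
t(P_{2r+1},W_D)=\sum_{j=0}^{r+1}\binom{2r+2-j}{j}a^j m^{2r+1-j}=m^{2r+1}A_r(w),\qquad w=a/m\ge 0 .
\]
By Lemma~\ref{lemedgeconstraint}, $c=m^2+2am=m^2(1+2w)$. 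Therefore
\[
t(P_{2r+1},W_D)=c^{r+1/2}\,\varphi(w)^{1/2},\qquad \varphi(w)=\frac{A_r(w)^2}{(1+2w)^{2r+1}} .
\]

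Next I determine the range of $w$ for fixed $c$. The only constraint is $b=a+m\le1$, that is, $m(1+w)\le1$. Substituting $m=\sqrt{c/(1+2w)}$, this becomes $c(1+w)^2\le 1+2w$, and it defines an interval $w\in[0,w^*(c)]$. At $w=0$ we have $D=[0,\sqrt c]$, which is the quasi-clique, and $t=c^{r+1/2}$. At $w=w^*$ we have $b=1$, so $D=[1-\sqrt{1-c},1]$, which is the quasi-star, and $t=S_r(c)$. It therefore suffices to show that $\varphi$ attains its maximum over $[0,w^*]$ at an endpoint. For this it is enough that $\varphi$ is first nonincreasing and then nondecreasing on $[0,\infty)$.

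The main step, and the expected obstacle, is this unimodality. The logarithmic derivative of $\varphi$ has the sign of
\[
P(w)=(1+2w)A_r'(w)-(2r+1)A_r(w).
\]
With $b_j=\binom{2r+2-j}{j}$, the coefficient of $w^j$ in $P$ is
\[
p_j=2(j-1)b_{j-1}-(2r+1-j)b_j .
\]
We have $p_0=-(2r+1)<0$. At $j=r+1$, $p_{r+1}=2r\binom{r+2}{2}-r>0$. I would apply Descartes' rule of signs, so I need the $p_j$ to change sign exactly once. Set $n=2r+2-j$. A direct computation gives
\[
\frac{b_{j-1}}{b_j}=\frac{(n+1)j}{(n-j+2)(n-j+1)} .
\]
Hence $p_j\ge0$ if and only if
\[
\frac{2(j-1)j(n+1)}{(n-j+2)(n-j+1)}\ge 2r+1-j .
\]
As $j$ increases, the left side is increasing: the numerator grows while $n-j$ decreases, and the product $j(n+1)$ should be checked to be increasing in the relevant range. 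The right side is decreasing. So the sign of $p_j$ switches from negative to nonnegative exactly once. Descartes' rule then gives exactly one positive root $w_0$ of $P$. Thus $\varphi$ decreases on $[0,w_0]$ and increases afterwards.

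Consequently $\max_{[0,w^*]}\varphi=\max\{\varphi(0),\varphi(w^*)\}$. This yields
\[
t(P_{2r+1},W_D)\le\max\{c^{r+1/2},S_r(c)\} .
\]
I would verify the monotonicity of the coefficient ratio carefully, since it carries the whole argument. If that verification fails, the fallback is to bound the number of sign changes of the sequence $p_j$ by a direct comparison of consecutive terms.
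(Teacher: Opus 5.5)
Your overall strategy is the paper's: write $t(P_{2r+1},W_D)=c^{r+1/2}A_r(w)/(1+2w)^{r+1/2}$ with $w=a/(b-a)$, note that the feasible $w$ form an interval whose endpoints are the quasi-clique and the quasi-star, and use Descartes' rule to show that $P(w)=(1+2w)A_r'(w)-(2r+1)A_r(w)$ has a single positive zero. The gap is in the step you flag as carrying the whole argument. Your formula $p_j=2(j-1)b_{j-1}-(2r+1-j)b_j$ is not the coefficient of $w^j$ in $P$. Expanding directly, $(1+2w)A_r'$ contributes $(j+1)b_{j+1}+2jb_j$ to $w^j$, so
\[
p_j=(j+1)b_{j+1}+(2j-2r-1)b_j .
\]
Consequently your values $p_0=-(2r+1)$ and $p_{r+1}=2r\binom{r+2}{2}-r$ are wrong; the true values are $p_0=b_1-(2r+1)b_0=0$ and $p_{r+1}=b_{r+1}=1$. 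Your formula also produces a spurious coefficient $p_{r+2}=2(r+1)$, although $\deg P=r+1$. Already for $r=1$ one has $P(w)=w^2-w$, whereas your formula gives $-3-6w+5w^2+4w^3$. So your sign-change analysis is about the wrong sequence and says nothing about $P$. It also rests on an unverified ``should be checked'' monotonicity claim, and it divides by $b_j$, which vanishes at $j=r+2$.

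The repair is short and needs no monotonicity argument. From
\[
\frac{b_{j+1}}{b_j}=\frac{(2r+2-2j)(2r+1-2j)}{(j+1)(2r+2-j)}
\]
one gets the closed form $p_j=-b_j\frac{j(2r+1-2j)}{2r+2-j}$ for $0\le j\le r+1$. Thus the coefficients are $0$ at $j=0$, negative for $1\le j\le r$, and $p_{r+1}=1>0$. Descartes' rule gives at most one positive zero. Since $P<0$ for small $w>0$ and $P>0$ for large $w$, there is exactly one, which is how the paper concludes.

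One further slip: at $w=w^*$ you have $b=1$ and $c=1-a^2$, so $D=[\sqrt{1-c},1]$, not $[1-\sqrt{1-c},1]$. The latter has the right length $1-\sqrt{1-c}$ but the wrong edge density. The value $S_r(c)$ you state is correct for $D=[\sqrt{1-c},1]$.
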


\begin{proof}
If $c=0$, then $b=a$, so $t(P_{2r+1},W_D)=0$. If $c=1$, then $a=0$ and $b=1$, so $W_D=1$ almost everywhere and $t(P_{2r+1},W_D)=1$. Hence the claim is immediate in these two cases. Assume $0<c<1$. Then $a<b$. Since $D=[a,b]$, by Lemma~\ref{lemthresholdedge} we have 
$
c=t(K_2,W_D)=b^2-a^2.
$
For this interval, the quantile obtained from Lemma~\ref{lemquantile} is $q(u)=a+u$ for $0\le u\le b-a$, and hence $g(u)=a$. By Lemma~\ref{lemexactexpansion},
\[
\begin{aligned}
t(P_{2r+1},W_D)
&=\sum_{I\in\cI(P_{2r+1})}\int_{[0,b-a]^{V(P_{2r+1})\setminus I}}\prod_{i\in I}a\prod_{j\in V(P_{2r+1})\setminus I}\dd u_j\\
&=\sum_{I\in\cI(P_{2r+1})}a^{|I|}(b-a)^{2r+1-|I|}=\sum_{j=0}^{r+1}\binom{2r+2-j}{j}a^j(b-a)^{2r+1-j}.
\end{aligned}
\]
Let $\rho=\frac{a}{b-a}$. Then $a=\rho(b-a)$ and $b=(1+\rho)(b-a)$. Hence
\[
c=b^2-a^2=((1+\rho)^2-\rho^2)(b-a)^2=(1+2\rho)(b-a)^2.
\]
Thus $b-a=\sqrt{\frac{c}{1+2\rho}}$, and
\[
t(P_{2r+1},W_D)=(b-a)^{2r+1}\sum_{j=0}^{r+1}\binom{2r+2-j}{j}\rho^j=c^{r+1/2}\frac{A_r(\rho)}{(1+2\rho)^{r+1/2}},
\]
where
$
A_r(\rho)=\sum_{j=0}^{r+1}\binom{2r+2-j}{j}\rho^j.
$
Let
$
H_r(\rho)=\frac{A_r(\rho)}{(1+2\rho)^{r+1/2}}.
$
Then we have $t(P_{2r+1},W_D)=c^{r+1/2}H_r(\rho)$.

We next determine where $H_r$ can be largest on the feasible range of $\rho$. Since $b=(1+\rho)\sqrt{\frac{c}{1+2\rho}}$, the condition $b\le1$ is exactly
$
(1+\rho)\sqrt{\frac{c}{1+2\rho}}\le1.
$
Notice that the function $\rho\mapsto(1+\rho)\sqrt{\frac{c}{1+2\rho}}$ is increasing on $[0,\infty)$, since
$
\frac{d}{d\rho}\log\left((1+\rho)\sqrt{\frac{c}{1+2\rho}}\right)=\frac{\rho}{(1+\rho)(1+2\rho)}\ge0.
$
At $\rho=0$, this value is $\sqrt c<1$, while it tends to infinity as $\rho\to\infty$. Hence the feasible values of $\rho$ form an interval $[0,\rho_{\max}]$, where $\rho_{\max}$ is determined by $b=1$. 

It remains to control $H_r$ on this interval. Write $\alpha_j=\binom{2r+2-j}{j}$. Then $A_r(\rho)=\sum_{j=0}^{r+1}\alpha_j\rho^j$, and
\[
H_r'(\rho)=\frac{(1+2\rho)A_r'(\rho)-(2r+1)A_r(\rho)}{(1+2\rho)^{r+3/2}}.
\]
Let $N_r(\rho)=(1+2\rho)A_r'(\rho)-(2r+1)A_r(\rho)$. The constant coefficient of $N_r$ is $\alpha_1-(2r+1)\alpha_0=0$. For $1\le j\le r$, the coefficient of $\rho^j$ in $N_r$ is
\[
(j+1)\alpha_{j+1}+(2j-2r-1)\alpha_j=-\alpha_j\frac{j(2r+1-2j)}{2r+2-j}<0.
\]
The coefficient of $\rho^{r+1}$ in $N_r$ is $\alpha_{r+1}>0$. Thus the nonzero coefficients of $N_r$ have signs $-,\ldots,-,+$. By Descartes' rule of signs, $N_r$ has at most one positive zero. Since $N_r(\rho)<0$ for all sufficiently small positive $\rho$ and $N_r(\rho)>0$ for all sufficiently large $\rho$, it has exactly one positive zero. Therefore $H_r$ decreases and then increases on $[0,\infty)$.
Therefore,
it follows that
$
H_r(\rho)\le \max\{H_r(0),H_r(\rho_{\max})\}
$
for every feasible $\rho$. Multiplying by $c^{r+1/2}$, we obtain
\[
t(P_{2r+1},W_D)\le \max\{c^{r+1/2}H_r(0),c^{r+1/2}H_r(\rho_{\max})\}.
\]
At the left endpoint $\rho=0$, one has $a=0$, $D=[0,\sqrt c]$, and $H_r(0)=1$. Thus $c^{r+1/2}H_r(0)=c^{r+1/2}$. At the right endpoint $\rho=\rho_{\max}$, one has $b=1$. Therefore $D=[a,1]$ and $c=1-a^2$, so $a=\sqrt{1-c}$. Hence
\[
c^{r+1/2}H_r(\rho_{\max})=\sum_{j=0}^{r+1}\binom{2r+2-j}{j}(1-a)^{2r+1-j}a^j=S_r(c).
\]
Consequently,
$
t(P_{2r+1},W_D)\le \max\{S_r(c),c^{r+1/2}\}
$ and 
this proves the lemma.
\end{proof}

The same calculation identifies the transition point introduced in the introduction.

\begin{lemma}\label{lemtransition}
The polynomial $E_r(w)/w$ has a unique positive zero $w_r$. If $c_r=(1+2w_r)/(1+w_r)^2$, then $S_r(c)=c^{r+1/2}$ holds in $(0,1)$ exactly at $c=c_r$. Moreover $S_r(c)>c^{r+1/2}$ for $0<c<c_r$, while $S_r(c)<c^{r+1/2}$ for $c_r<c<1$.
\end{lemma}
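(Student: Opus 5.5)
The plan is to reuse the computation in the proof of Lemma~\ref{lemtwoblockendpoint}: the quasi-star is exactly the interval $D=[a,1]$ with $a=\sqrt{1-c}$. Write $\rho=a/(1-a)=a/s$. Then
\[
S_r(c)=s^{2r+1}A_r(\rho),\qquad c=1-a^2=s(1+a)=s^2(1+2\rho),
\]
because $1+a=s(1+2\rho)$. Hence
\[
S_r(c)=c^{r+1/2}H_r(\rho),\qquad H_r(\rho)=\frac{A_r(\rho)}{(1+2\rho)^{r+1/2}},
\]
as in that proof. Moreover $c=(1+2\rho)/(1+\rho)^2$, since $s=1/(1+\rho)$.

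The map $\rho\mapsto(1+2\rho)/(1+\rho)^2$ has logarithmic derivative $-2\rho/((1+\rho)(1+2\rho))<0$. It is therefore a strictly decreasing bijection from $(0,\infty)$ onto $(0,1)$; as $c$ increases, $a$ decreases and so does $\rho$. Consequently the comparison of $S_r(c)$ with $c^{r+1/2}$ on $(0,1)$ is equivalent to comparing $H_r(\rho)$ with $1$ on $(0,\infty)$. Since $H_r>0$, this is the same as comparing $A_r(\rho)^2$ with $(1+2\rho)^{2r+1}$, that is, determining the sign of $E_r(\rho)$.

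To locate the zeros of $H_r-1$ we use the shape of $H_r$ obtained in the proof of Lemma~\ref{lemtwoblockendpoint}. By Descartes' rule applied to $N_r$, the function $H_r$ is strictly decreasing on $(0,\rho^*)$ and strictly increasing on $(\rho^*,\infty)$, where $\rho^*$ is the unique positive zero of $N_r$. Also $H_r(0)=1$. As $\rho\to\infty$,
\[
H_r(\rho)\sim\frac{\rho^{r+1}}{2^{r+1/2}\rho^{r+1/2}}\to\infty,
\]
using $\alpha_{r+1}=1$. So $H_r<1$ on $(0,\rho^*]$, and on $(\rho^*,\infty)$ the function $H_r$ increases strictly from a value below $1$ to $+\infty$. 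It therefore crosses $1$ exactly once, at some $w_r>\rho^*$, with $H_r<1$ on $(0,w_r)$ and $H_r>1$ on $(w_r,\infty)$.

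Since $E_r(0)=0$, the positive zeros of $E_r(w)/w$ are exactly the positive zeros of $E_r$. These are the solutions of $H_r(w)=1$, so $w_r$ is the unique positive zero of $E_r(w)/w$. Translating back through the decreasing map with $c_r=(1+2w_r)/(1+w_r)^2$, large $\rho$ corresponds to small $c$. Hence $S_r(c)>c^{r+1/2}$ for $0<c<c_r$, $S_r(c)<c^{r+1/2}$ for $c_r<c<1$, and equality holds in $(0,1)$ only at $c_r$.

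The one point needing care is the identity $S_r(c)=c^{r+1/2}H_r(\rho)$ under this parametrization, together with the orientation of the bijection. If the Descartes argument seems too indirect, a fallback is to apply Descartes' rule directly to the coefficients of $E_r(w)/w$.
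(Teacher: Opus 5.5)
Your proposal is correct and follows essentially the same route as the paper: you parametrize the quasi-star by $w=a/s$ so that $S_r(c)=c^{r+1/2}H_r(w)$ with $c=(1+2w)/(1+w)^2$ strictly decreasing in $w$, and then use the decrease-then-increase shape of $H_r$ from Lemma~\ref{lemtwoblockendpoint} together with $H_r(0)=1$ and $H_r\to\infty$. Your explicit check of the identity $c=s^2(1+2\rho)$ and of $H_r\to\infty$ via $\alpha_{r+1}=1$ fills in details the paper leaves implicit, and the suggested fallback of applying Descartes' rule directly to $E_r(w)/w$ is not needed.
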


\begin{proof}
For the quasi-star graphon, write $w=a/s$. Then $w>0$, $c=(1+2w)/(1+w)^2$, and $S_r(c)=c^{r+1/2}H_r(w)$ with $H_r$ as in the proof of Lemma~\ref{lemtwoblockendpoint}. The equation $S_r(c)=c^{r+1/2}$ is equivalent to $H_r(w)=1$, which is equivalent to $A_r(w)^2=(1+2w)^{2r+1}$. This is the equation $E_r(w)=0$. Since $E_r(0)=0$, the positive solutions are exactly the positive zeros of $E_r(w)/w$.

The proof of Lemma~\ref{lemtwoblockendpoint} shows that $H_r$ decreases and then increases on $[0,\infty)$. Also $H_r(0)=1$, $H_r(w)<1$ for all sufficiently small positive $w$, and $H_r(w)\to\infty$ as $w\to\infty$. Hence $H_r(w)=1$ has exactly one positive solution. This proves that $E_r(w)/w$ has a unique positive zero $w_r$.

The map $w\mapsto (1+2w)/(1+w)^2$ is strictly decreasing on $(0,\infty)$. Therefore the unique positive solution $w_r$ gives a unique transition value $c_r=(1+2w_r)/(1+w_r)^2$. Since $H_r(w)<1$ for $0<w<w_r$ and $H_r(w)>1$ for $w>w_r$, and since larger $w$ corresponds to smaller $c$, we get $S_r(c)>c^{r+1/2}$ for $0<c<c_r$ and $S_r(c)<c^{r+1/2}$ for $c_r<c<1$.
\end{proof}

We next prepare the three-step endpoint. The following polynomial is the word model for the independent-set expansion of a three-step threshold graphon.

For $n\ge0$, let $\mathcal{W}_n$ be the set of words of length $n$ over the alphabet $\{a,b,\ell\}$ in which every occurrence of $b$ has only $\ell$ as a neighbour when the neighbour exists. Equivalently, the adjacent pairs $ab$, $ba$, and $bb$ are forbidden. Let
\[
F_n(a,b,\ell)=\sum_{w\in\mathcal{W}_n}\prod_{i=1}^n w_i.
\]
For $0\le x\le y\le1$, let $T_r(x,y)$ be the $P_{2r+1}$ density of the three-step threshold graphon associated with $D=[0,x]\cup[y,1]$. This graphon has edge density $1-y^2+x^2$, and
\[
T_r(x,y)=F_{2r+1}(x,y-x,1-y).
\]

The next counting lemma gives the coefficients of $F_n$ explicitly.

\begin{lemma}\label{lemwordcoefficients}
Let $f_{\alpha,\beta,\gamma}$ be the coefficient of $a^\alpha b^\beta\ell^\gamma$ in $F_n$, where $\alpha+\beta+\gamma=n$. If $0\le\beta\le\gamma$, then
\[
f_{\alpha,\beta,\gamma}=\binom{\gamma+1}{\beta}\binom{\alpha+\gamma-\beta}{\alpha}.
\]
If $\beta=\gamma+1$, then $f_{0,\gamma+1,\gamma}=1$ and $f_{\alpha,\gamma+1,\gamma}=0$ for $\alpha>0$. In all other cases $f_{\alpha,\beta,\gamma}=0$.
\end{lemma}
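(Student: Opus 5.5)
The coefficient $f_{\alpha,\beta,\gamma}$ counts the words in $\mathcal W_n$ with exactly $\alpha$ letters $a$, $\beta$ letters $b$ and $\gamma$ letters $\ell$. The plan is a two-stage stars-and-bars count. First place the $\ell$'s, then place the $b$'s into the gaps between $\ell$'s, and finally distribute the $a$'s.

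\emph{Placing the $b$'s.} Since $ab$, $ba$ and $bb$ are forbidden, every neighbour of a $b$ must be an $\ell$. Consider the $\gamma+1$ gaps determined by the $\gamma$ letters $\ell$: before the first $\ell$, between consecutive $\ell$'s, and after the last $\ell$. Each gap holds a maximal block of non-$\ell$ letters, that is, a word over $\{a,b\}$. A $b$ inside such a block must have no neighbour in the block. This forces the block to be exactly the single letter $b$, because any other letter in the block, $a$ or $b$, would eventually be adjacent to that $b$. Hence the word is determined by two choices. The first is the set of gaps occupied by a lone $b$; there are $\beta$ of them, chosen among $\gamma+1$ gaps. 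The second is a word in $a$ alone filling each remaining gap, including the empty word.

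\emph{Distributing the $a$'s.} The $\alpha$ letters $a$ are spread over the remaining $\gamma+1-\beta$ gaps with arbitrary multiplicities. By stars and bars this gives $\binom{\alpha+(\gamma+1-\beta)-1}{\alpha}=\binom{\alpha+\gamma-\beta}{\alpha}$ ways. Multiplying the two choices gives the formula $\binom{\gamma+1}{\beta}\binom{\alpha+\gamma-\beta}{\alpha}$ whenever $\gamma+1-\beta\ge1$, that is, whenever $\beta\le\gamma$.

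\emph{Remaining cases.} The case $\beta=\gamma+1$ needs separate care, since there are then no free gaps. All $\gamma+1$ gaps contain a $b$, so $\alpha$ must be $0$. In that case the word is the unique alternating word $b\ell b\ell\cdots b$, giving $f_{0,\gamma+1,\gamma}=1$ and $f_{\alpha,\gamma+1,\gamma}=0$ for $\alpha>0$. Note that the general binomial expression would read $\binom{\alpha-1}{\alpha}$ here, which is why this case is stated separately. Finally, if $\beta>\gamma+1$, there are not enough gaps, so $f_{\alpha,\beta,\gamma}=0$. This also covers the degenerate case $\gamma=0$, where the only gap is the whole word.

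\emph{Main obstacle.} The only delicate point is the gap-structure claim that a non-$\ell$ block containing a $b$ must equal $b$. This has to be checked for boundary blocks as well, where the block has only one $\ell$ neighbour or, when $\gamma=0$, none. The argument is identical there, since the condition only constrains neighbours that exist. Once this is in place, the rest is routine binomial counting.
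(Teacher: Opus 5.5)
Your proposal is correct and follows the paper's proof essentially step for step. Both arguments cut the word at the $\gamma$ letters $\ell$ into $\gamma+1$ gaps, each of which holds either a lone $b$ or a possibly empty run of $a$'s; they then choose the $\beta$ gaps holding $b$ and distribute the $a$'s by stars and bars, with $\beta=\gamma+1$ forcing $\alpha=0$. You also implicitly use the converse, that every such filling of the gaps gives a word in $\mathcal{W}_n$, which holds because $aa$, $a\ell$, $\ell a$ and $\ell\ell$ are all allowed.
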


\begin{proof}
Fix the positions occupied by the $\gamma$ letters $\ell$. They create $\gamma+1$ gaps. A gap may contain any number of letters $a$, or it may contain a single letter $b$, but it cannot contain both types and it cannot contain more than one letter $b$. If $0\le\beta\le\gamma$, choose the $\beta$ gaps containing a letter $b$, and then distribute the $\alpha$ letters $a$ among the remaining $\gamma+1-\beta$ gaps. This gives $\binom{\gamma+1}{\beta}\binom{\alpha+\gamma-\beta}{\alpha}$. If $\beta=\gamma+1$, every gap contains a letter $b$, so this is possible only when $\alpha=0$, and then there is exactly one word. The remaining cases are impossible.
\end{proof}

The following identity is the algebraic form of removing the two forced neighbours of a marked middle-block vertex. Here $\partial_a,\partial_b,\partial_\ell$ denote formal partial differentiation with respect to the variables $a,b,\ell$. The operator $\mathcal L$ is the directional derivative that appears when the three-step parameters $a=x$, $b=y-x$, and $\ell=1-y$ vary along a fixed edge-density curve.

\begin{lemma}\label{lemfirstderivativeidentity}
Let
\[
\mathcal{L}=(a+b)\partial_a-b\partial_b-a\partial_\ell,\qquad \cE=a\partial_a-b\partial_b.
\]
For every $n\ge2$, we have
\[
\mathcal{L} F_n=ab\,\cE F_{n-2}.
\]
\end{lemma}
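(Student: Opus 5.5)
The plan is to prove the identity by comparing coefficients of each monomial $a^\alpha b^\beta\ell^\gamma$ on both sides, using the explicit coefficients from Lemma~\ref{lemwordcoefficients}. Both sides are homogeneous of degree $n$: $\mathcal L$ and $\cE$ preserve degree, and $ab\,\cE F_{n-2}$ has degree $2+(n-2)$. So it suffices to fix $\alpha+\beta+\gamma=n$ and match coefficients. Write $f^{(n)}_{\alpha,\beta,\gamma}$ for the coefficients of $F_n$, with the convention that coefficients with a negative index vanish.

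First I compute the action of each piece of $\mathcal L$ on a monomial $a^{\alpha'}b^{\beta'}\ell^{\gamma'}$.
\begin{itemize}
\item The term $a\partial_a-b\partial_b=\cE$ multiplies it by $\alpha'-\beta'$.
\item The term $b\partial_a$ sends it to $\alpha' a^{\alpha'-1}b^{\beta'+1}\ell^{\gamma'}$.
\item The term $-a\partial_\ell$ sends it to $-\gamma' a^{\alpha'+1}b^{\beta'}\ell^{\gamma'-1}$.
\end{itemize}
Collecting contributions to $a^\alpha b^\beta\ell^\gamma$, the coefficient of this monomial in $\mathcal L F_n$ is
\[
(\alpha-\beta)f^{(n)}_{\alpha,\beta,\gamma}+(\alpha+1)f^{(n)}_{\alpha+1,\beta-1,\gamma}-(\gamma+1)f^{(n)}_{\alpha-1,\beta,\gamma+1}.
\]
On the right, the monomial arises from $a^{\alpha-1}b^{\beta-1}\ell^\gamma$ in $F_{n-2}$, which $\cE$ multiplies by $(\alpha-1)-(\beta-1)=\alpha-\beta$. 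So the coefficient on the right is $(\alpha-\beta)f^{(n-2)}_{\alpha-1,\beta-1,\gamma}$. The lemma therefore reduces to the scalar identity
\[
(\alpha-\beta)f_{\alpha,\beta,\gamma}+(\alpha+1)f_{\alpha+1,\beta-1,\gamma}-(\gamma+1)f_{\alpha-1,\beta,\gamma+1}=(\alpha-\beta)f_{\alpha-1,\beta-1,\gamma}.
\]

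In the generic range, where $1\le\beta\le\gamma$ and $\alpha\ge1$, every index triple lies in the range $\beta\le\gamma$ of Lemma~\ref{lemwordcoefficients}. Put $N=\alpha+\gamma-\beta$. Substituting the formulas turns the identity into
\[
(\alpha-\beta)\tbinom{\gamma+1}{\beta}\tbinom{N}{\alpha}+(\alpha+1)\tbinom{\gamma+1}{\beta-1}\tbinom{N+2}{\alpha+1}-(\gamma+1)\tbinom{\gamma+2}{\beta}\tbinom{N}{\alpha-1}=(\alpha-\beta)\tbinom{\gamma+1}{\beta-1}\tbinom{N}{\alpha-1}.
\]
I would factor out $\binom{\gamma+1}{\beta-1}\binom{N}{\alpha-1}$ and express each ratio as a rational function of $\alpha,\beta,\gamma$:
\[
\tbinom{\gamma+1}{\beta}=\tbinom{\gamma+1}{\beta-1}\tfrac{\gamma+2-\beta}{\beta},\qquad
\tbinom{\gamma+2}{\beta}=\tbinom{\gamma+1}{\beta-1}\tfrac{\gamma+2}{\beta},
\]
\[
\tbinom{N}{\alpha}=\tbinom{N}{\alpha-1}\tfrac{N-\alpha+1}{\alpha},\qquad
\tbinom{N+2}{\alpha+1}=\tbinom{N}{\alpha-1}\tfrac{(N+1)(N+2)}{\alpha(\alpha+1)}.
\]
Clearing denominators leaves a polynomial identity in $\alpha,\beta,\gamma$, which I would verify by direct expansion.

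The main obstacle is the boundary cases, where some index triple leaves the range $\beta\le\gamma$ or has a zero index. These are:
\begin{itemize}
\item $\beta=0$, where only the first and third terms survive and the right side vanishes;
\item $\alpha=0$;
\item $\beta=\gamma+1$, and the neighbouring case $\beta=\gamma+2$ that can appear in $f_{\alpha+1,\beta-1,\gamma}$ only when $\beta-1=\gamma+1$;
\item $\beta\ge\gamma+2$.
\end{itemize}
I would handle each case separately using the special values $f_{0,\gamma+1,\gamma}=1$ and the vanishing rules from Lemma~\ref{lemwordcoefficients}.

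If this case analysis becomes messy, the fallback is a bijective proof on words. The left side counts words in $\mathcal W_n$ with a signed marked letter, obtained by replacing a marked $a$ by $b$ or a marked $\ell$ by $a$. The right side inserts a block $ab$ into a word of length $n-2$, corresponding to removing the two forced $\ell$-neighbours of a marked middle letter $b$. I would then build a sign-reversing involution that cancels everything except these configurations.
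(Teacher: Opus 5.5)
Your proposal is correct and is essentially the paper's proof: compare coefficients of $a^\alpha b^\beta\ell^\gamma$ using Lemma~\ref{lemwordcoefficients}, verify the generic range by binomial algebra, and check the cases $\beta=\gamma+1$ and $\beta=\gamma+2$ separately. Your generic-range reduction does close up, to $(\alpha-\beta)(k+1)(k+2)+\beta(\alpha+k+1)(\alpha+k+2)-\alpha(\beta+k+1)(\beta+k+2)=\alpha\beta(\alpha-\beta)$ with $k=\gamma-\beta$; one small pointer is that when $\beta=\gamma+2$ the only surviving special value is $f_{0,\gamma+2,\gamma+1}=1$ in the third term (for $\alpha=1$), matched by $f_{0,\gamma+1,\gamma}=1$ on the right, while $f_{\alpha+1,\gamma+1,\gamma}$ vanishes because its first index is positive.
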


\begin{proof}
We compare coefficients. Let $f_{\alpha,\beta,\gamma}$ be as in Lemma~\ref{lemwordcoefficients}, with the convention that it is zero if one of the indices is negative. The coefficient of $a^\alpha b^\beta\ell^\gamma$ in $\mathcal{L} F_n$ is
\[
(\alpha-\beta)f_{\alpha,\beta,\gamma}+(\alpha+1)f_{\alpha+1,\beta-1,\gamma}-(\gamma+1)f_{\alpha-1,\beta,\gamma+1}.
\]
The coefficient of the same monomial in $ab\,\cE F_{n-2}$ is $(\alpha-\beta)f_{\alpha-1,\beta-1,\gamma}$. Substituting the formula of Lemma~\ref{lemwordcoefficients} and using Pascal's identity, we obtain the equality in the ranges $0\le\beta\le\gamma$ and $\beta=\gamma+1$. It remains to check the boundary case $\beta=\gamma+2$. If $\alpha\ne1$, then both sides vanish. If $\alpha=1$, then the left hand side is $-(\gamma+1)f_{0,\gamma+2,\gamma+1}=-(\gamma+1)$, while the right hand side is $(1-\gamma-2)f_{0,\gamma+1,\gamma}=-(\gamma+1)$. In all remaining cases, both sides vanish. Hence the two polynomials are equal.
\end{proof}

The next lemma is the combinatorial positivity statement behind the Bernstein certificate.

\begin{lemma}\label{lempositivecoefficients}
For every $m\ge1$, the polynomial $\mathcal{L}\cE F_m$ has nonnegative integer coefficients.
\end{lemma}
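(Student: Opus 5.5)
The plan is to avoid comparing coefficients of $\mathcal{L}\cE F_m$ directly. Instead I will commute $\mathcal{L}$ past $\cE$ and then apply Lemma~\ref{lemfirstderivativeidentity}. This should write $\mathcal{L}\cE F_m$ as a sum of terms whose nonnegativity is evident from the nonnegativity of the coefficients of $F_m$ and $F_{m-2}$.

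\emph{The commutator.} Both $\mathcal{L}$ and $\cE$ are first-order derivations with polynomial coefficients, so $[\mathcal{L},\cE]=\mathcal{L}\cE-\cE\mathcal{L}$ is again a derivation. Its component in direction $x\in\{a,b,\ell\}$ is $\mathcal{L}(\cE^x)-\cE(\mathcal{L}^x)$, where $\mathcal{L}^a=a+b$, $\mathcal{L}^b=-b$, $\mathcal{L}^\ell=-a$ and $\cE^a=a$, $\cE^b=-b$, $\cE^\ell=0$. A short computation gives:
\begin{itemize}
\item in direction $a$: $(a+b)-(a-b)=2b$;
\item in direction $b$: $b-b=0$;
\item in direction $\ell$: $0-(-a)=a$.
\end{itemize}
Hence
\[
[\mathcal{L},\cE]=2b\,\partial_a+a\,\partial_\ell .
\]
I will verify this directly on monomials as a check.

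\emph{Main identity for $m\ge2$.} By Lemma~\ref{lemfirstderivativeidentity}, $\cE\mathcal{L}F_m=\cE\bigl(ab\,\cE F_{m-2}\bigr)$. Since $\cE$ is a derivation and $\cE(ab)=ab-ab=0$, this equals $ab\,\cE^2F_{m-2}$. Therefore
\[
\mathcal{L}\cE F_m=ab\,\cE^2F_{m-2}+2b\,\partial_aF_m+a\,\partial_\ell F_m .
\]
Each term has nonnegative integer coefficients, for the following reasons.
\begin{itemize}
\item $F_n$ has nonnegative integer coefficients $f_{\alpha,\beta,\gamma}$, by its definition as a word count or by Lemma~\ref{lemwordcoefficients}.
\item $\cE$ multiplies the coefficient of $a^\alpha b^\beta\ell^\gamma$ by $\alpha-\beta$. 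So $\cE^2F_{m-2}$ has coefficients $(\alpha-\beta)^2f_{\alpha,\beta,\gamma}\ge0$, and multiplying by $ab$ preserves this.
\item Formal partial derivatives of a polynomial with nonnegative integer coefficients again have nonnegative integer coefficients, and so do their multiples $2b\,\partial_aF_m$ and $a\,\partial_\ell F_m$.
\end{itemize}
Summing the three terms proves the lemma for $m\ge2$. As a sanity check, I would confirm the identity on $m=2$ by direct expansion.

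\emph{The case $m=1$.} Here $F_1=a+b+\ell$ and $\cE F_1=a-b$, so
\[
\mathcal{L}\cE F_1=(a+b)+b=a+2b,
\]
which has nonnegative integer coefficients.

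The only place needing care is the commutator computation: the signs of the $\partial_b$ and $\partial_\ell$ components decide whether the extra terms come out positive. I expect no other obstacle, since the argument needs no case analysis of the coefficient formula in Lemma~\ref{lemwordcoefficients}.
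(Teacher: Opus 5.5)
Your proof is correct, and it takes a genuinely different route from the paper. I checked the commutator $[\mathcal{L},\cE]=2b\,\partial_a+a\,\partial_\ell$ and the resulting identity, for instance at $m=2$, where both sides equal $2a^2+6ab+2a\ell+4b\ell$.

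\textbf{The paper's route.} It expands $\mathcal{L}\cE F_m$ coefficientwise as
$p_{\alpha,\beta,\gamma}=(\alpha-\beta)^2f_{\alpha,\beta,\gamma}+(\alpha+1)(\alpha-\beta+2)f_{\alpha+1,\beta-1,\gamma}+(\gamma+1)(\beta-\alpha+1)f_{\alpha-1,\beta,\gamma+1}$.
The last two weights can be negative. So the paper substitutes the closed form of Lemma~\ref{lemwordcoefficients} and splits into the cases $\beta\le\gamma$, $\beta=\gamma+1$, $\beta=\gamma+2$. It then obtains visibly nonnegative rational expressions, though the simplification in the main case is only asserted.

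\textbf{Your route.} You combine the commutator with $\cE(ab)=0$ and Lemma~\ref{lemfirstderivativeidentity} to get $\mathcal{L}\cE F_m=ab\,\cE^2F_{m-2}+2b\,\partial_aF_m+a\,\partial_\ell F_m$. Coefficientwise this reads
$p_{\alpha,\beta,\gamma}=(\alpha-\beta)^2f_{\alpha-1,\beta-1,\gamma}+2(\alpha+1)f_{\alpha+1,\beta-1,\gamma}+(\gamma+1)f_{\alpha-1,\beta,\gamma+1}$,
which is manifestly nonnegative. It is exactly the coefficient identity of Lemma~\ref{lemfirstderivativeidentity} multiplied by $\alpha-\beta$. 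Your separate treatment of $m=1$ is needed, since that lemma is stated only for $n\ge2$; the application only uses $m=2r-1\ge3$ anyway.

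\textbf{Comparison.} Your argument needs only that $F_n$ has nonnegative integer coefficients, which is immediate from the word count, and not their closed form. It replaces the unshown algebra and the boundary cases with a three-line operator computation. It also makes the closed form enter the overall proof only once, inside Lemma~\ref{lemfirstderivativeidentity}, rather than twice. The paper's computation gives explicit formulas for every $p_{\alpha,\beta,\gamma}$, showing for instance which coefficients are strictly positive. Nothing later in the paper needs that extra information.
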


\begin{proof}
Let $p_{\alpha,\beta,\gamma}$ be the coefficient of $a^\alpha b^\beta\ell^\gamma$ in $\mathcal{L}\cE F_m$, where $\alpha+\beta+\gamma=m$. By applying $\mathcal{L}$ to $\cE F_m$, we obtain that
\[
\begin{aligned}
p_{\alpha,\beta,\gamma}={}&(\alpha-\beta)^2f_{\alpha,\beta,\gamma}\\
&+(\alpha+1)(\alpha-\beta+2)f_{\alpha+1,\beta-1,\gamma}\\
&+(\gamma+1)(\beta-\alpha+1)f_{\alpha-1,\beta,\gamma+1}.
\end{aligned}
\]
Here the coefficients $f_{\alpha,\beta,\gamma}$ are those of $F_m$, and the same zero convention is used as above.

Assume first that $0\le\beta\le\gamma$. Let $k=\gamma-\beta$. Substituting the formula from Lemma~\ref{lemwordcoefficients} and simplifying gives
\[
p_{\alpha,\beta,\gamma}=f_{\alpha,\beta,\gamma}\frac{\alpha\beta(\alpha-\beta)^2+\alpha\beta(2\alpha+\beta+6k+9)+(\alpha+2\beta)(k+1)(k+2)}{(k+1)(k+2)}.
\]
The denominator is positive and every term in the numerator is nonnegative, so $p_{\alpha,\beta,\gamma}\ge0$.

Assume next that $\beta=\gamma+1$. If $\alpha=0$, then $p_{0,\gamma+1,\gamma}=2(\gamma+1)\ge0$. If $\alpha\ge1$, then
\[
p_{\alpha,\gamma+1,\gamma}=(\gamma+1)\left((\alpha-\gamma)^2+3\gamma+5\right)\ge0.
\]
Finally assume that $\beta=\gamma+2$. The only possible nonzero coefficient occurs when $\alpha=1$, and then $p_{1,\gamma+2,\gamma}=(\gamma+1)(\gamma+2)\ge0$. All remaining coefficients are zero. Therefore every coefficient of $\mathcal{L}\cE F_m$ is nonnegative.
\end{proof}

We now translate the previous lemma into the Bernstein form used to control the three-step endpoint.

\begin{lemma}\label{lembernsteinpositive}
Let $m=2r-1$ and write
$
\mathcal{L}\cE F_m(a,b,\ell)=\sum_{\alpha+\beta+\gamma=m}p_{\alpha,\beta,\gamma}a^\alpha b^\beta\ell^\gamma.
$
Then
\[
\frac1y\mathcal{L}\cE F_m(uy,(1-u)y,1-y)=\sum_{i=0}^m\sum_{j=0}^{m-1}C_{ij}^{(r)}u^i(1-u)^{m-i}y^j(1-y)^{m-1-j},
\]
where
\[
C_{ij}^{(r)}=\sum_\alpha p_{\alpha,j+1-\alpha,m-1-j}\binom{m-1-j}{i-\alpha}.
\]
Here $\binom{N}{K}=0$ if $K<0$ or $K>N$. In particular, all $C_{ij}^{(r)}$ are nonnegative.
\end{lemma}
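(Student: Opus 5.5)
Substitute $a=uy$, $b=(1-u)y$, $\ell=1-y$ into each monomial of $\mathcal{L}\cE F_m$ and then expand the leftover factors in the Bernstein basis. A monomial $a^\alpha b^\beta\ell^\gamma$ with $\alpha+\beta+\gamma=m$ becomes
\[
u^\alpha(1-u)^\beta y^{\alpha+\beta}(1-y)^\gamma .
\]
Write $j+1=\alpha+\beta$, so that $\gamma=m-1-j$. Dividing by $y$ leaves $y^j(1-y)^{m-1-j}$, which is already a degree-$(m-1)$ Bernstein monomial in $y$. The division is legitimate because every monomial with a nonzero coefficient has $\alpha+\beta\ge1$. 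To see this, note that $\mathcal{L}=(a+b)\partial_a-b\partial_b-a\partial_\ell$: each term of $\mathcal{L}$ either preserves the total degree in $(a,b)$ or raises it by one, since $-a\partial_\ell$ replaces an $\ell$ by an $a$. Moreover $\cE$ kills any monomial $\ell^m$, as it has no $a$ or $b$. Hence $\mathcal{L}\cE F_m$ has no pure $\ell^m$ term, and the index $j$ runs over $0,\ldots,m-1$.

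The $u$-part $u^\alpha(1-u)^\beta$ has degree only $\alpha+\beta=j+1\le m$. To bring it to degree $m$, multiply by
\[
1=(u+(1-u))^{m-1-j}=\sum_{t}\binom{m-1-j}{t}u^t(1-u)^{m-1-j-t}.
\]
This turns it into
\[
\sum_t\binom{m-1-j}{t}u^{\alpha+t}(1-u)^{m-\alpha-t}.
\]
Setting $i=\alpha+t$, the coefficient of $u^i(1-u)^{m-i}$ is $\binom{m-1-j}{i-\alpha}$, and the stated convention $\binom{N}{K}=0$ for $K<0$ or $K>N$ automatically enforces $0\le t\le m-1-j$.

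Collecting terms, for fixed $(i,j)$ we sum over all $\alpha$ with $\beta=j+1-\alpha$ and $\gamma=m-1-j$. This gives exactly
\[
C^{(r)}_{ij}=\sum_\alpha p_{\alpha,j+1-\alpha,m-1-j}\binom{m-1-j}{i-\alpha},
\]
where terms with $\beta<0$ vanish by the zero convention for $p$. Nonnegativity of $C^{(r)}_{ij}$ then follows immediately from Lemma~\ref{lempositivecoefficients}, since each $C^{(r)}_{ij}$ is a sum of products of nonnegative integers $p$ with nonnegative binomial coefficients.

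The only point needing care, and the one I expect to be the main (though minor) obstacle, is checking that no $(a,b)$-degree-zero term survives, so that division by $y$ yields a genuine polynomial. This is the $\mathcal{L}$ and $\cE$ degree bookkeeping above. The rest is the routine degree elevation of the $u$-factor.
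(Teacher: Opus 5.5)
Your proposal is correct and follows essentially the same route as the paper: substitute $a=uy$, $b=(1-u)y$, $\ell=1-y$, divide by $y$ using $p_{0,0,m}=0$, multiply the $u$-factor by $(u+(1-u))^{\gamma}$ with $\gamma=m-1-j$ to raise it to degree $m$, and read off nonnegativity from Lemma~\ref{lempositivecoefficients}. Your degree-bookkeeping argument that no pure $\ell^m$ term survives justifies a point the paper only asserts; a shorter reason is that every term of $\mathcal{L}$ multiplies by $a$ or $b$.
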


\begin{proof}
By Lemma~\ref{lempositivecoefficients}, all $p_{\alpha,\beta,\gamma}$ are nonnegative. Also $p_{0,0,m}=0$, so division by $y$ creates no negative power. After substituting $a=uy$, $b=(1-u)y$, and $\ell=1-y$, the monomial $a^\alpha b^\beta\ell^\gamma/y$ becomes $u^\alpha(1-u)^\beta y^{\alpha+\beta-1}(1-y)^\gamma$. Since $\alpha+\beta+\gamma=m$, the exponent of $y$ is $m-1-\gamma$. Multiplying by $1=(u+(1-u))^\gamma$,
\[
u^\alpha(1-u)^\beta=\sum_{s=0}^\gamma\binom{\gamma}{s}u^{\alpha+s}(1-u)^{\beta+\gamma-s}.
\]
For fixed $i$ and $j$, the coefficient of $u^i(1-u)^{m-i}y^j(1-y)^{m-1-j}$ is \[ C_{ij}^{(r)}=\sum_\alpha p_{\alpha,j+1-\alpha,m-1-j}\binom{m-1-j}{i-\alpha}. \] Each summand in this expression is nonnegative.
\end{proof}

The next lemma proves the endpoint comparison for the three-step family. This is the point at which the combinatorial Bernstein certificate enters the proof.

\begin{lemma}\label{lemthreestependpoint}
Let $r\ge2$. For every $0\le x\le y\le1$, one has
\[
T_r(x,y)\le \max\{S_r(1-y^2+x^2),(1-y^2+x^2)^{r+1/2}\}.
\]
\end{lemma}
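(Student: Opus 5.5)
The plan is to compare $T_r(x,y)$ with the two endpoints of the curve of constant edge density that passes through $(x,y)$, and to show that $T_r$ is first decreasing and then increasing along that curve. Fix $0\le x\le y\le1$ and set $c=1-y^2+x^2$.

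\textbf{Degenerate cases.} If $y=x$, then $D=[0,1]$ up to a point, so $c=1$ and $T_r=1$; the claim is trivial. Otherwise $c<1$. Put $k=\sqrt{1-c}>0$ and parametrize the constant-density curve as
\[
x(t)=k\sinh t,\qquad y(t)=k\cosh t,\qquad 0\le t\le t_1,
\]
where $\cosh t_1=1/k$. Along this curve $y^2-x^2=k^2$ is preserved and $0\le x\le y\le1$ holds throughout. The given pair $(x,y)$ is one of these points.

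\textbf{The two endpoints.} At $t=0$ we have $x=0$ and $D=[\sqrt{1-c},1]$ up to a point, so $T_r=S_r(c)$ by the computation at $\rho_{\max}$ in the proof of Lemma~\ref{lemtwoblockendpoint}. At $t=t_1$ we have $y=1$ and $D=[0,\sqrt c]$, so $T_r=c^{r+1/2}$. It therefore suffices to show that $\phi(t)=T_r(x(t),y(t))$ is first nonincreasing and then nondecreasing on $[0,t_1]$.

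\textbf{The flow is the operator $\mathcal L$.} Since $\dot x=y$ and $\dot y=x$, the variables $a=x$, $b=y-x$, $\ell=1-y$ satisfy
\[
\dot a=a+b,\qquad \dot b=-b,\qquad \dot \ell=-a.
\]
This is exactly the vector field of $\mathcal L$. Using $T_r=F_{2r+1}(a,b,\ell)$ and Lemma~\ref{lemfirstderivativeidentity}, we get
\[
\phi'(t)=(\mathcal L F_{2r+1})(a,b,\ell)=ab\,(\cE F_{2r-1})(a,b,\ell).
\]
Put $\psi(t)=(\cE F_{2r-1})(a(t),b(t),\ell(t))$. Differentiating along the same flow gives $\psi'(t)=(\mathcal L\cE F_{2r-1})(a,b,\ell)$. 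Lemma~\ref{lempositivecoefficients} says this polynomial has nonnegative coefficients, and $a,b,\ell\ge0$ along the curve, so $\psi'\ge0$. Hence $\psi$ is nondecreasing.

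Lemma~\ref{lembernsteinpositive} packages the same fact in the $(u,y)$ coordinates. It could be used instead if one prefers to avoid the flow. With the flow, plain coefficient positivity is enough, because all three substituted variables are nonnegative.

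\textbf{Sign pattern of $\phi'$.} Because $\psi$ is nondecreasing, the set where $\psi<0$ is an initial segment of $[0,t_1]$, and $\psi\ge0$ afterwards. The factor $ab$ is nonnegative. So $\phi'\le0$ on an initial interval and $\phi'\ge0$ on the rest. Therefore
\[
\phi(t)\le\max\{\phi(0),\phi(t_1)\}=\max\{S_r(c),c^{r+1/2}\}.
\]

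\textbf{Main point to check.} The step that needs care is verifying that the constant-density flow coincides exactly with $\mathcal L$, including the sign conventions. Beyond that, the only remaining checks are that $ab$ vanishing at isolated points does not break the monotonicity argument, and that the case $c=1$ is handled separately as above.
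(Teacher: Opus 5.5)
Your proof is correct and follows the paper's argument: the identity $\mathcal{L}F_{2r+1}=ab\,\cE F_{2r-1}$ from Lemma~\ref{lemfirstderivativeidentity}, monotonicity of $\cE F_{2r-1}$ along the constant-density curve via Lemma~\ref{lempositivecoefficients}, and the resulting decrease-then-increase pattern. The hyperbolic parametrization only absorbs the paper's factor $1/(a+b)$. You are also right that, since $a,b,\ell\ge0$ on the curve, plain coefficient nonnegativity already gives $\psi'\ge0$, so neither the Bernstein re-expansion of Lemma~\ref{lembernsteinpositive} nor the endpoint sign $P_r(\sqrt c,1)>0$ is actually needed.
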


\begin{proof}
Let $c=1-y^2+x^2$. The cases $c=0$ and $c=1$ are immediate. If $c=0$, then $x=0$ and $y=1$, so $T_r(x,y)=0$. If $c=1$, then $x=y$, and the corresponding three-step graphon is complete up to a null set, so $T_r(x,y)=1$. Assume now that $0<c<1$. For fixed $c$, the admissible points are exactly the points on the curve $y=\sqrt{1-c+x^2}$ with $0\le x\le\sqrt c$. It is enough to prove that $T_r(x,\sqrt{1-c+x^2})$ attains its maximum on this interval at one of the two endpoints.

Let $a=x$, $b=y-x$, and $\ell=1-y$. On the open interval $0<x<\sqrt c$, one has $a>0$, $b>0$, $\ell>0$, and $a+b=y>0$. Along the curve,
\[
\frac d{dx}=\partial_x+\frac xy\partial_y=\frac1{a+b}\mathcal{L}.
\]
Since $T_r(x,y)=F_{2r+1}(a,b,\ell)$ and $m=2r-1$, Lemma~\ref{lemfirstderivativeidentity} gives
\[
\frac d{dx}T_r(x,y)=\frac1{a+b}\mathcal{L} F_{m+2}(a,b,\ell)=\frac{ab}{a+b}\cE F_m(a,b,\ell).
\]
Let $P_r(x,y)=\cE F_m(x,y-x,1-y)$. The preceding identity shows that the sign of the derivative of $T_r$ is the sign of $P_r$ in the interior of the interval, since $ab/(a+b)>0$. The derivative of $P_r$ along the same curve is
\[
\frac d{dx}P_r(x,y)=\frac1{a+b}\mathcal{L}\cE F_m(a,b,\ell).
\]
Since $a+b=y$, write $u=x/y$. Then $0<u<1$, $0<y<1$, and
\[
a=uy,\qquad b=(1-u)y,\qquad \ell=1-y.
\]
By Lemma~\ref{lembernsteinpositive},
\[
\frac d{dx}P_r(x,y)=\frac1y\mathcal{L}\cE F_m(uy,(1-u)y,1-y)
=\sum_{i=0}^m\sum_{j=0}^{m-1}C_{ij}^{(r)}u^i(1-u)^{m-i}y^j(1-y)^{m-1-j}.
\]
All coefficients $C_{ij}^{(r)}$ are nonnegative, and all basis terms are nonnegative on the interval. Hence $\frac d{dx}P_r(x,y)\ge0$. Therefore $P_r(x,y)$ is nondecreasing along the curve.

At the right endpoint one has $x=\sqrt c$, $y=1$, and $\ell=0$. Since $m\ge3$, the only word contributing to $F_m(a,b,0)$ is $a^m$, so $P_r(\sqrt c,1)=m c^{m/2}>0$. Therefore $P_r$ is either nonnegative throughout the interval or changes sign once from negative to positive. Hence $T_r(x,\sqrt{1-c+x^2})$ is either nondecreasing or decreases and then increases. Its maximum is attained at an endpoint.

At the left endpoint $x=0$ and $y=\sqrt{1-c}$, the value is
\[
T_r(0,\sqrt{1-c})=\sum_{j=0}^{r+1}\binom{2r+2-j}{j}(1-\sqrt{1-c})^{2r+1-j}(\sqrt{1-c})^j=S_r(c).
\]
At the right endpoint $x=\sqrt c$ and $y=1$, the value is $T_r(\sqrt c,1)=c^{r+1/2}$. Thus
\[
T_r(x,y)\le \max\{S_r(c),c^{r+1/2}\}.
\]
This proves the lemma.
\end{proof}

The case $r=1$ was settled by Ahlswede and Katona. We record it in the same normalization.

\begin{lemma}[Ahlswede and Katona {\cite{AhlswedeKatona1978}}]\label{lempthree}
For every $0\le c\le1$, one has
\[
M_1(c)=\max\{S_1(c),c^{3/2}\}.
\]
\end{lemma}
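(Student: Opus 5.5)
The plan is to reprove the statement with the machinery already developed, rather than pass from the finite Ahlswede--Katona theorem to graphons. (That passage would also work: approximate any graphon in cut distance by step graphons $W_{G_n}$ of finite graphs, with edge density converging to $c$, and use continuity of $S_1$ and $c^{3/2}$ in $c$.) The lower bound $M_1(c)\ge\max\{S_1(c),c^{3/2}\}$ holds because $A_{\st}^c$ and $A_{\clq}^c$ are feasible, so the work is the upper bound. By Lemma~\ref{lemthresholdreduction} it suffices to bound $t(P_3,W_D)$ over measurable $D$ with $2\int_D t\dd t\le c$. Since $\max\{S_1,c^{3/2}\}$ should be nondecreasing in $c$ (both constructions are monotone), I will reduce to equality in the constraint; alternatively, I will simply carry the actual edge density of $W_D$ through the argument.

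First I would run the collapse argument of Theorem~\ref{thmonedefectcollapse} for $r=1$, where it becomes exact and simpler. By Lemma~\ref{lemexactexpansion}, the independent sets of $P_3=\{0,1,2\}$ are $\emptyset,\{0\},\{2\},\{1\},\{0,2\}$, which gives
\[
t(P_3,W_D)=m^3+2mG+B+\int_0^m g(u)^2\dd u,\qquad B=mG-2J.
\]
The proof of Lemma~\ref{lemmoment}, rescaled with $h=1-m$, gives $\int_0^m g^2\le G^2/m+2(1-m)J/m$. The right-hand side is then linear in $J\in[0,J_{\max}]$, with $J_{\max}$ given by Lemma~\ref{lemdefectrange}. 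Its values at the two ends of this interval are exactly $T_1^{(2)}(m,G)$ and $T_1^{(3)}(m,G)$, by the computations of Claims~\ref{D_2} and~\ref{D_3}, which did not use $r\ge2$. The interval endpoint $T_1^{(2)}$ is then bounded by Lemma~\ref{lemtwoblockendpoint}, which is stated for all $r\ge1$.

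It remains to handle the three-step endpoint $T_1(x,y)=F_3(x,y-x,1-y)$ along the curve $y=\sqrt{1-c+x^2}$, $0\le x\le\sqrt c$. This is where Lemma~\ref{lemthreestependpoint} breaks down, since it needed $m=2r-1\ge3$ to make $P_r>0$ at the right endpoint. I would redo its argument with $m=1$. By Lemma~\ref{lemfirstderivativeidentity}, $\frac{d}{dx}T_1=\frac{ab}{a+b}\,\cE F_1$, where $\cE F_1=a-b$. Next, $\mathcal L\cE F_1=\mathcal L(a-b)=(a+b)+b\ge0$, so $P_1=a-b$ is nondecreasing along the curve. Hence the derivative of $T_1$ is either of one sign throughout, or changes sign once from negative to positive. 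In every case $T_1$ attains its maximum on the curve at an endpoint. These endpoints give $S_1(c)$ and $c^{3/2}$, exactly as in the final computation of Lemma~\ref{lemthreestependpoint}.

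The step I expect to need the most care is this last sign analysis: the right-endpoint positivity used for $r\ge2$ is unavailable, and one must check that monotonicity of $P_1$ alone already forces quasi-convexity of $T_1$. The boundary cases $m\in\{0,1\}$, $G=0$, and $G=m(1-m)$ can then be handled by continuity, as in Theorem~\ref{thmonedefectcollapse}.
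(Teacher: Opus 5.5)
Your proposal is correct, and it takes a genuinely different route from the paper. The paper gives no argument for this lemma: it imports the finite Ahlswede--Katona theorem and tacitly makes the graph-to-graphon passage you sketch in parentheses. You instead rerun the paper's $r\ge2$ machinery at $r=1$, and you correctly identify where $r\ge2$ was used.

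In the collapse step, the only inexact term is $\mathcal T_{I_0}=\int_0^m g^2$. The second-moment inequality $\int_0^1y^2\le\mu^2+2\eta$ from the proof of Lemma~\ref{lemmoment} bounds it directly by $G^2/m+2(1-m)J/m$. This bypasses the $3/2$-moment step, whose factor $G^{r-2}$ is meaningless at $r=1$. The resulting bound $m^3+3mG+G^2/m+2(1-2m)J/m$ is linear in $J$, and Claims~\ref{D_2} and~\ref{D_3} hold verbatim.

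At the three-step endpoint, $P_1=2x-y$ has derivative $(a+2b)/(a+b)>0$ along the curve. Monotonicity of $P_1$ alone already makes $T_1$ either monotone or decreasing-then-increasing. In fact the right-endpoint positivity in Lemma~\ref{lemthreestependpoint} is not needed even for $r\ge2$. At $r=1$ it fails for $c<1/4$, since $P_1(\sqrt c,1)=2\sqrt c-1$.

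Your route gives a uniform, self-contained treatment of all $r\ge1$, and it shows that the restriction $r\ge2$ in Theorem~\ref{thmonedefectcollapse} and Lemma~\ref{lemthreestependpoint} is only technical. The citation gives brevity and an exact result for every finite $n$ and $e$, without relying on the Blekherman--Patel reduction.

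One small correction: the boundary cases need no continuity argument. For $0<m<1$ your linear bound is valid for all $0\le G\le m(1-m)$, with $J_{\max}=0$ at both extremes. For $m\in\{0,1\}$, $W_D$ is $0$ or $1$ almost everywhere.
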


We now prove the graphon form of the theorem.

\begin{proof}[Proof of Theorem~\ref{thmgraphonform}]
The case $r=1$ is Lemma~\ref{lempthree}. Assume $r\ge2$. The lower bound follows from the quasi-star and quasi-clique graphons. We prove the upper bound.

The cases $c=0$ and $c=1$ are immediate. If $c=0$, then every admissible graphon has edge density zero, and hence has zero $P_{2r+1}$ density. If $c=1$, then every graphon has $P_{2r+1}$ density at most $1$, and the complete graphon attains this value. Assume now that $0<c<1$.

By Lemma~\ref{lemthresholdreduction}, it is enough to consider threshold graphons. It is also enough to consider threshold graphons of edge density exactly $c$. If $2\int_Dt\dd t<c$, then the non-atomic measure $2t\dd t$ allows us to enlarge $D$ to a measurable set $D'$ with $2\int_{D'}t\dd t=c$. Since $W_D\le W_{D'}$ pointwise, one has $t(P_{2r+1},W_D)\le t(P_{2r+1},W_{D'})$.

Let $W_D$ have edge density $c$. Let $m=\Leb(D)$, and let $q(u)=u+g(u)$ be the quantile representation obtained from Lemma~\ref{lemquantile}. Set $G=\int_0^m g(u)\dd u$. By Lemma~\ref{lemedgeconstraint}, one has $c=t(K_2,W_D)=m^2+2G$. Since $0\le g\le1-m$, we have $0\le G\le m(1-m)$. Moreover, if $m=0$, then $G=0$ and hence $c=0$; if $m=1$, then $g=0$ almost everywhere and hence $c=1$. Therefore, we have $0<m<1$ since $0<c<1$.

If $G=0$, then $g=0$ almost everywhere. Hence $q(u)=u$ almost everywhere, and $D$ agrees up to a null set with $[0,m]$. The graphon is the quasi-clique graphon of edge density $m^2=c$, so
\[
t(P_{2r+1},W_D)=c^{r+1/2}\le \max\{S_r(c),c^{r+1/2}\}.
\]
If $G=m(1-m)$, then $g=1-m$ almost everywhere. Hence $q(u)=u+1-m$ almost everywhere, and $D$ agrees up to a null set with $[1-m,1]$. The graphon is the quasi-star graphon of edge density $c=m^2+2m(1-m)$, so
\[
t(P_{2r+1},W_D)=S_r(c)\le \max\{S_r(c),c^{r+1/2}\}.
\]

It remains to treat the case $0<G<m(1-m)$. By Theorem~\ref{thmonedefectcollapse},
\[
t(P_{2r+1},W_D)\le\max\{T_r^{(2)}(m,G),T_r^{(3)}(m,G)\}.
\]
The two-block endpoint satisfies
\[
T_r^{(2)}(m,G)\le\max\{S_r(c),c^{r+1/2}\}
\]
by Lemma~\ref{lemtwoblockendpoint}. The three-step endpoint satisfies
\[
T_r^{(3)}(m,G)\le\max\{S_r(c),c^{r+1/2}\}
\]
by Lemma~\ref{lemthreestependpoint}. Hence $t(P_{2r+1},W_D)\le\max\{S_r(c),c^{r+1/2}\}$ for every threshold graphon of edge density $c$. This proves $M_r(c)=\max\{S_r(c),c^{r+1/2}\}.$
\end{proof}

The finite graph statement follows from the graphon statement by the standard passage from homomorphism densities to injective copies.

\begin{proof}[Proof of Theorem~\ref{thmmain}]
Let $W_G$ be the graphon associated with $G$. Then $t(K_2,W_G)=2e(G)/n^2=c$, and by Theorem~\ref{thmgraphonform}, $t(P_{2r+1},W_G)\le\max\{S_r(c),c^{r+1/2}\}$. Also $t(P_{2r+1},W_G)=\hom(P_{2r+1},G)/n^{2r+1}$. The number of homomorphisms from $P_{2r+1}$ to $G$ that identify two vertices of $P_{2r+1}$ is $O(n^{2r})$. Every simple unlabelled copy of $P_{2r+1}$ is counted by exactly two injective homomorphisms. Hence
\[
N(P_{2r+1},G)=\frac12t(P_{2r+1},W_G)n^{2r+1}+O(n^{2r})\le\frac12\max\{S_r(c),c^{r+1/2}\}n^{2r+1}+O(n^{2r}).
\]
By Lemma~\ref{lemtransition}, the maximum is $S_r(c)$ for $0<c\le c_r$ and $c^{r+1/2}$ for $c_r\le c<1$. The quasi-star and quasi-clique graph sequences converge to the corresponding graphons, so the upper bounds are asymptotically sharp.
\end{proof}

\section{Concluding remarks}\label{secconclusion}

This paper determines the asymptotic maximum number of copies of all paths with an odd number of vertices under a fixed edge density. The results show that for each path $P_{2r+1}$, the extremal graphon is obtained by a quasi-star or a quasi-clique, and the transition between the two is given by an explicit algebraic point. Paths with an even number of vertices belong to a simpler degenerate case. To see this, let $P_{2r}$ be the path on $2r$ vertices. Then $P_{2r}$ contains a perfect matching of size $r$. Therefore, for any graphon $W$, since $0\le W\le1$, we have $t(P_{2r},W)\le t(K_2,W)^r$. Thus $M_{P_{2r}}(c)=c^r$, and this value is attained by the quasi-clique graphon. In other words, paths with an even number of vertices are quasi-clique-only cases, while the paths $P_{2r+1}$ proved in this paper are precisely the cases where a genuine quasi-star versus quasi-clique phase transition occurs.

Nagy proposed a broader quasi-star/quasi-clique conjecture: whether for every fixed graph $H$ and every edge density $c$, the asymptotic extremum under fixed edge density is always given by a quasi-star or a quasi-clique \cite{Nagy2017}. This conjecture was later disproved by Day and Sarkar \cite{DaySarkar2021}. Blekherman and Patel subsequently provided a unified framework from the perspective of threshold graphons, showing that for certain graphs and certain edge densities, the optimal threshold graphon cannot degenerate into a quasi-star or a quasi-clique but requires a more complex threshold structure \cite{BlekhermanPatel2024}.

Therefore, the correct question is not whether quasi-star and quasi-clique are always sufficient, but rather to identify which families of graphs still satisfy this two-construction principle. There are already several important positive examples. For a graph $H$ with no isolated vertices, the fractional independence number of $H$ is defined as
\[ \alpha^*(H)=\max\left\{\sum_{v\in V(H)}x_v\ \middle|\ x_u+x_v\le1\ \text{for every }uv\in E(H),\ x_v\ge0\ \text{for every }v\in V(H)\right\}. \] If a graph $H$ with no isolated vertices satisfies $\alpha^*(H)=|V(H)|/2$, then $H$ is usually called balanced; such graphs enjoy a stronger quasi-clique-only conclusion~\cite{Alon1981,Alon1986}, namely that the quasi-clique gives the asymptotic extremum at all edge densities. Stars provide a genuine positive family exhibiting the quasi-star/quasi-clique dichotomy \cite{ReiherWagner2018,CairncrossMubayi2025}. The results of this paper show that all odd paths also belong to this positive family with the two-phase behavior.

\begin{problem}\label{probcharacterize}
Determine all finite graphs $H$ for which the following holds for every $0\le c\le1:$
\[
M_H(c)=\max\{t(H,A_{\st}^c),t(H,A_{\clq}^c)\}.
\]
\end{problem}

Beyond this, one may further investigate the corresponding stability and uniqueness questions, which concern the structure of all asymptotically extremal graph sequences for graphs $H$ satisfying Problem~\ref{probcharacterize}. One would like to know whether, apart from possible transition points, every such sequence must be close in cut distance to either the quasi-star graphon or the quasi-clique graphon, and whether at the transition point these two are the only extremal structures or whether there exist other asymptotically extremal structures given by mixtures of the two or by more complicated threshold graphons.

The answers to these questions may depend on the matching structure of $H$, the fractional independence number, and whether, after Blekherman and Patel's threshold reduction, threshold graphons with three or more steps are required. The proof in this paper shows that excluding general threshold graphons can require additional structural collapse and an algebraic certificate, even for the family of paths. Therefore, Problem~\ref{probcharacterize} may require a combination of the combinatorial structure of $H$ and a multi-parameter analysis of the corresponding threshold graphon variational problem.

\bibliographystyle{plain}
\bibliography{refs}
\end{document}